\def\NZQ{\mathbb}               
\def\ZZ{{\NZQ Z}}
\def\RR{{\NZQ R}}
\def\frk{\mathfrak}               
\def\Phi{{\frk N}}
\def\eb{{\bold e}}
\def\sb{{\bold s}}
\def\tb{{\bold t}}
\def\xb{{\bold x}}
\def\yb{{\bold y}}
\def\zb{{\bold z}}
\def\opn#1#2{\def#1{\operatorname{#2}}} 
\opn\chara{char} 
\opn\length{\ell} 
\opn\pd{pd} 
\opn\rk{rk}
\opn\projdim{proj\,dim} 
\opn\injdim{inj\,dim} 
\opn\rank{rank}
\opn\depth{depth} 
\opn\grade{grade} 
\opn\height{height}
\opn\embdim{emb\,dim} 
\opn\codim{codim}
\opn\Tr{Tr} 
\opn\bigrank{big\,rank}
\opn\superheight{superheight}
\opn\lcm{lcm}
\opn\trdeg{tr\,deg}
\opn\reg{reg} 
\opn\lreg{lreg} 
\opn\ini{in} 
\opn\lpd{lpd}
\opn\size{size}
\opn\mult{mult}
\opn\dist{dist}
\opn\cone{cone}
\opn\lex{lex}
\opn\rev{rev}
\opn\div{div} \opn\Div{Div} \opn\cl{cl} \opn\Cl{Cl}
\opn\Spec{Spec} \opn\Supp{Supp} \opn\supp{supp} \opn\Sing{Sing}
\opn\Ass{Ass} \opn\Min{Min}
\opn\Ann{Ann} \opn\Rad{Rad} \opn\Soc{Soc}
\opn\Syz{Syz} \opn\Im{Im} \opn\Ker{Ker} \opn\Coker{Coker}
\opn\Am{Am} \opn\Hom{Hom} \opn\Tor{Tor} \opn\Ext{Ext}
\opn\End{End} \opn\Aut{Aut} \opn\id{id} \opn\ini{in}
\opn\nat{nat}
\opn\pff{pf}
\opn\Pf{Pf} \opn\GL{GL} \opn\SL{SL} \opn\mod{mod} \opn\ord{ord}
\opn\Gin{Gin}
\opn\Hilb{Hilb}\opn\adeg{adeg}\opn\std{std}\opn\ip{infpt}
\opn\Pol{Pol}
\opn\sat{sat}
\opn\Var{Var}
\opn\Gen{Gen}
\opn\aff{aff} \opn\con{conv} \opn\relint{relint} \opn\st{st}
\opn\lk{lk} \opn\cn{cn} \opn\core{core} \opn\vol{vol}
\opn\link{link} \opn\star{star}
\opn\gr{gr}
\def\Hc{{\mathcal H}}
\def\Pc{{\mathcal P}}
\def\Qc{{\mathcal Q}}
\def\pot#1#2{#1[\kern-0.28ex[#2]\kern-0.28ex]}
\opn\dirlim{\underrightarrow{\lim}}
\opn\inivlim{\underleftarrow{\lim}}
\let\iso=\cong
\let\to=\rightarrow
\def\Implies{\ifmmode\Longrightarrow \else
        \unskip${}\Longrightarrow{}$\ignorespaces\fi}
\def\implies{\ifmmode\Rightarrow \else
        \unskip${}\Rightarrow{}$\ignorespaces\fi}
\def\iff{\ifmmode\Longleftrightarrow \else
        \unskip${}\Longleftrightarrow{}$\ignorespaces\fi}
\newtheorem{Theorem}{Theorem}[section]
\newtheorem{Lemma}[Theorem]{Lemma}
\newtheorem{Corollary}[Theorem]{Corollary}
\newtheorem{Proposition}[Theorem]{Proposition}
\newtheorem{Remark}[Theorem]{Remark}
\newtheorem{Conjecture}[Theorem]{Conjecture}
\let\epsilon\varepsilon
\let\phi=\varphi
\let\kappa=\varkappa
\def\qed{\ifhmode\textqed\fi
      \ifmmode\ifinner\quad\qedsymbol\else\dispqed\fi\fi}
\def\textqed{\unskip\nobreak\penalty50
       \hskip2em\hbox{}\nobreak\hfil\qedsymbol
       \parfillskip=0pt \finalhyphendemerits=0}
\def\dispqed{\rlap{\qquad\qedsymbol}}
\opn\dis{dis}
\opn\height{height}
\opn\dist{dist}
\def\pnt{{\raise0.5mm\hbox{\large\bf.}}}
\opn\Lex{Lex}
\def\Pb{{\bold P}}
\def\Ib{{\bold I}}
\opn\Asc{Asc}
\opn\asc{asc}
\opn\Pyr{Pyr}
\opn\Ehr{Ehr}
\opn\Vol{Vol}
\opn\Chim{Chim}
\begin{document}
\title{Gorenstein properties and integer decomposition properties of lecture hall polytopes}
\author{Takayuki Hibi, McCabe Olsen, and Akiyoshi Tsuchiya}
\address{Takayuki Hibi,
Department of Pure and Applied Mathematics,
Graduate School of Information Science and Technology,
Osaka University, Suita, Osaka 565-0871, Japan}
\email{hibi@math.sci.osaka-u.ac.jp}
\address{McCabe Olsen,
Department of Mathematics,
University of Kentucky,
Lexington, KY 40506--0027, USA}
\email{mccabe.olsen@uky.edu}
\address{Akiyoshi Tsuchiya,
Department of Pure and Applied Mathematics,
Graduate School of Information Science and Technology,
Osaka University, Suita, Osaka 565-0871, Japan}
\email{a-tsuchiya@cr.math.sci.osaka-u.ac.jp}
\thanks{
The second author was supported by a 2016 NSF/JSPS EAPSI Fellowship award NSF OEIS-1613525. The third author is partially supported by Grant-in-Aid for JSPS Fellows 16J01549.}
\subjclass[2010]{05A20, 05E40, 13P20, 52B20}
\keywords{Lecture hall polytopes, Gorenstein polytopes, integer decomposition property
}
\begin{abstract}
Though much is known about $\sb$-lecture hall polytopes, there are still many unanswered questions.
In this paper, we show that $\sb$-lecture hall polytopes satisfy the integer decomposition property (IDP) in the case of monotonic $\sb$-sequences. Given restrictions on a monotonic $\sb$-sequence, we discuss necessary and sufficient conditions for the Fano, reflexive and Gorenstein  properties. Additionally, we give a construction for producing Gorenstein/IDP lecture hall polytopes.  
\end{abstract}
\maketitle

\section{Introduction}
Let $\Pc\subset\RR^d$ be a $d$-dimensional convex lattice polytope. For $t\in \ZZ_{>0}$, \emph{lattice point enumerator} $i(\Pc,t)$ gives the number of lattice points in $t\Pc=\{t \alpha:\alpha\in \Pc\}$, the  $t$th dilation of $\Pc$. That is,
	\[
	i(\Pc,t)=\#(t\Pc\cap \ZZ^d), \ \ t\in\ZZ_{>0}.
	\]
Provided that $\Pc$ is a lattice polytope, it is known  that this is a polynomial in the variable $t$ of degree $d$ (\cite{Ehrhart}).  
The \emph{Ehrhart series} for $\Pc$, $\Ehr_{\Pc}{(\lambda)}$, is the rational generating function 
	\[
	\Ehr_{\Pc}(\lambda)= \sum_{t\geq 0}i(\Pc,t)\lambda^t=\frac{\delta(\Pc,\lambda)}{(1-\lambda)^{d+1}}
	\]
where $\delta(\Pc,\lambda)=\delta_0+\delta_1\lambda+\delta_2\lambda^2+\cdots+\delta_d\lambda^d$ is the \emph{$\delta$-polynomial} of $\Pc$ and $\delta(\Pc)=(\delta_0,\delta_1,\delta_2,\ldots,\delta_d)$ the $\delta$-vector of $\Pc$. The $\delta$-polynomial ($\delta$-vector) is endowed with the following properties:
	\begin{itemize}
	\item $\delta_0=1$, $\delta_1=i(\Pc,1)-(d+1)$, and $\delta_d=\#(\Pc\setminus \partial\Pc\cap \ZZ^d)$;
	\item $\delta_i\geq 0$ for all $0\leq i\leq d$ (\cite{StanleyNonNeg});
	\item If $\delta_d\neq 0$, then $\delta_1\leq \delta_i$ for each $0\leq i\leq d-1$ (\cite{HibiBounds}). 
	\end{itemize}

The Ehrhart series and $\delta$-polynomials for polytopes have been studied extensively. For a detailed background on these topics, please refer to \cite{BeckRobins,Ehrhart,HibiRedBook,EC1}.

Let $\ZZ^{d \times d}$ denote the set of $d \times d$ integer matrices.
A matrix $A \in \ZZ^{d \times d}$ is {\em unimodular} if $\det (A) = \pm 1$.
Given lattice polytopes $\Pc \subset \RR^{d}$ 
and $\Qc \subset \RR^{d}$ of dimension $d$,
we say that $\Pc$ and $\Qc$ are {\em unimodularly equivalent}
if there exists a unimodular matrix $U \in \ZZ^{d \times d}$
and a vector ${\bf w} \in \ZZ^{d}$
such that $\Qc=f_U(\Pc)+{\bf w}$,
where $f_U$ is the linear transformation of $\RR^d$ defined by $U$,
i.e., $f_U({\bf v}) = {\bf v} \, U$ for all ${\bf v} \in \RR^d$.

We say that a lattice polytope $\Pc$ is \emph{Fano} if $(\Pc\setminus \partial\Pc) \cap \ZZ^d=\{{\bf 0}\}$. We say that $\Pc$ is \emph{reflexive} if it is Fano and its dual polytope
	\[
	\Pc^\vee=\left\{y\in\RR^d \ : \langle x,y \rangle\leq 1 \mbox{ for all } x\in\Pc \right\}
	\]
is a lattice polytope. 
Moreover, it follows from \cite{HibiDualPolytopes} that the following statements are equivalent:
	\begin{itemize}
	\item $\Pc$ is unimodularly equivalent to some reflexive polytope;
	\item  $\delta(\Pc,\lambda)$ is of degree $d$ and is symmetric, that is $\delta_i=\delta_{d-i}$ for $0\leq i \leq \lfloor \frac{d}{2} \rfloor$.
	\end{itemize}
We say that $\Pc$ is \emph{Gorenstein of index} $c$ where $c\in \ZZ_{>0}$ if $c\Pc$ is unimodularly equivalent to a reflexive polytope \cite{DeNegriHibi}.  Equivalently, $\Pc$ is Gorenstein if and only if $\delta(\Pc,\lambda)$ is symmetric with $\deg(\delta(\Pc,\lambda))=d-c+1$ 
(\cite{StanleyHFGA}).
	
We now give the definition of lecture hall polytopes. For a sequence of positive integers $\sb=(s_1,s_2,\ldots,s_d)$, the \emph{$\sb$-lecture hall polytope} is
	\[
	\Pb_d^{(\sb)}:=\left\{\xb\in \RR^d \ : \ 0\leq \frac{x_1}{s_1}\leq \frac{x_2}{s_2}\leq \cdots\leq \frac{x_d}{s_d}\leq 1 \right\}
	\] 
which alternatively has the vertex representation as the column vectors of the matrix
	\[
	\begin{bmatrix}
	0 & s_d & s_d & s_d & \cdots & s_d\\
	0 & 0 & s_{d-1} & s_{d-1} & \cdots & s_{d-1}\\
	0 & 0 & 0 & s_{d-2} & \cdots & s_{d-2}\\
	\vdots & \vdots & \vdots & & \ddots & \vdots\\
	0 & 0 & 0 & \cdots & 0 & s_1\\
	\end{bmatrix}
	\]
where $x_d$ is given by the first row and so on with $x_1$ given by the last row. It should be noted that there is a easy unimodular equivalence $\Pb_d^{(\sb)}\iso \Pb_d^{(s_d,\ldots,s_2,s_1)}$.

For a given $\sb=(s_1,s_2,\ldots, s_d)$, we define the \emph{$\sb$-inversion sequences} by the set  $\Ib_d^{(\sb)}:=\{\eb\in\ZZ^d \ : \ 0\leq e_i <s_i\}$. Given $\eb\in\Ib_d^{(\sb)}$, we define the \emph{ascent set} of $\eb$ by 
	\[
	 \Asc \, \eb:=\left\{ i \ : 0\leq i < d  \mbox{ and }  \frac{e_i}{s_i}<\frac{e_{i+1}}{s_{i+1}} \right\}
	\]	
with the convention that $e_0=1$ and $s_0=1$.
Let $\asc \, \eb:=|\Asc \, \eb|$. 
The following result of the $\delta$-polynomials of $\sb$-lecture hall polytopes for arbitrary $\sb$. 

\begin{Lemma}[{\cite[Theorem 5]{SavageSchuster}}]
	\label{deltapoly}
For any $\sb$, the $\delta$-polynomial of $\Pb_d^{(\sb)}$ is given by
	\[
	\delta\left(\Pb_d^{(\sb)},\lambda\right)=\sum_{\eb\in\Ib_d^{(\sb)}}\lambda^{\asc \, \eb}.
	\]
Moreover, these polynomials are real-rooted and hence unimodal.	
\end{Lemma}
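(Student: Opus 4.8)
The plan is to establish the two assertions separately: first the combinatorial formula for the $\delta$-polynomial, and then real-rootedness, from which unimodality follows because the coefficients are nonnegative.

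For the formula I would pass to the cone $C=\cone(\Pb_d^{(\sb)}\times\{1\})\subset\RR^{d+1}$ and observe that, grading a lattice point $(\xb,n)$ by its last coordinate $n$, one has $\Ehr_{\Pb_d^{(\sb)}}(\lambda)=\sum_{(\xb,n)\in C\cap\ZZ^{d+1}}\lambda^{n}$, since the height-$n$ slice of $C$ is exactly $n\Pb_d^{(\sb)}\cap\ZZ^d$. The key device is the quotient--remainder map: write each coordinate as $x_i=q_is_i+e_i$ with $0\le e_i<s_i$. This is a bijection between $C\cap\ZZ^{d+1}$ and triples $(\eb,(q_1,\dots,q_d),n)$ with $\eb\in\Ib_d^{(\sb)}$, $q_i\in\ZZ$, and $n\in\ZZ_{\ge0}$. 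Setting $f_i=e_i/s_i\in[0,1)$ with the boundary conventions $f_0=0$ and $f_{d+1}=0$, the defining chain $0\le x_1/s_1\le\cdots\le x_d/s_d\le n$ translates step by step into a chain of inequalities among $0,q_1,\dots,q_d,n$ whose $i$-th step is non-strict exactly when $f_i\le f_{i+1}$ and strict exactly when $f_i>f_{i+1}$; crucially, which steps are strict depends only on $\eb$, not on the $q_i$.

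Thus for each fixed $\eb$ the admissible $(q_1,\dots,q_d,n)$ range over a shifted orthant, and summing the resulting geometric series contributes $\lambda^{c(\eb)}/(1-\lambda)^{d+1}$, where $c(\eb)$ is the number of strict steps. Summing over $\eb$ gives $\delta(\Pb_d^{(\sb)},\lambda)=\sum_{\eb\in\Ib_d^{(\sb)}}\lambda^{c(\eb)}$, so it remains to identify $c(\eb)$ with $\asc\,\eb$. Here I would invoke the unimodular equivalence $\Pb_d^{(\sb)}\cong\Pb_d^{(s_d,\dots,s_1)}$: running the same computation for the reversed sequence and transporting back along $e_i\mapsto e_{d+1-i}$ converts the count of strict (descending) steps into the count of indices $i$ with $f_i<f_{i+1}$, which is precisely $\asc\,\eb$. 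The one point demanding care is this boundary bookkeeping at the two ends of the chain, where the homogenizing variable $n$ and the lower bound $0$ must be matched correctly with the conventional $0$th term of the ascent set.

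Real-rootedness is the substantive part, and I would prove it by the method of interlacing (``compatible'') polynomials, building $\sb$ up one coordinate at a time. Introduce refined polynomials splitting $\sum_{\eb}\lambda^{\asc\,\eb}$ according to the value of the final letter $e_d\in\{0,\dots,s_d-1\}$, and establish a linear recursion producing the refined family for $(s_1,\dots,s_d)$ from that for $(s_1,\dots,s_{d-1})$ by appending $e_d$ and updating the ascent set. The heart of the argument is a lemma asserting that this recursion has nonnegative coefficients and sends an interlacing sequence of real-rooted polynomials to another such sequence; granting it, induction on $d$ with a trivial base case shows the refined polynomials, and hence their sum, are real-rooted, and unimodality is then immediate since a real-rooted polynomial with nonnegative coefficients is unimodal. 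I expect the preservation of interlacing under the append step to be the main obstacle, as it requires choosing exactly the right refinement so that the combinatorial transfer respects the interlacing order.
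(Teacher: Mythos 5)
First, a point of orientation: the paper offers no proof of this lemma at all --- the $\delta$-polynomial formula is quoted from Savage--Schuster, and the real-rootedness is in fact a later theorem of Savage and Visontai rather than part of \cite[Theorem 5]{SavageSchuster} --- so there is no in-paper argument to compare against. Your reconstruction of the Ehrhart formula is essentially the standard proof and is sound. The quotient--remainder substitution $x_i=q_is_i+e_i$ decomposes the lattice points of the cone over $\Pb_d^{(\sb)}$ into classes indexed by $\eb\in\Ib_d^{(\sb)}$, the chain $0=q_0\circ q_1\circ\cdots\circ q_d\circ n$ has its $i$-th step strict exactly when $e_i/s_i>e_{i+1}/s_{i+1}$ (with $f_0=f_{d+1}=0$), each class contributes $\lambda^{c(\eb)}/(1-\lambda)^{d+1}$ where $c(\eb)$ counts strict steps, and the reversal $e_i\mapsto e_{d+1-i}$ does convert this descent statistic (whose boundary term sits at the right end) into $\asc$ (whose boundary term sits at the left end); I checked the boundary bookkeeping you flagged and it works. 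One caution: the convention as printed in the paper ($e_0=1$, $s_0=1$) would make position $0$ never an ascent; the convention your argument actually needs, and the intended one, is $e_0=0$.

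The real-rootedness half, however, is a program rather than a proof, and the gap is exactly where you say it is. Refining by the last letter $e_d$ and appending a coordinate is the right strategy, but the entire mathematical content of that theorem is the deferred lemma: one must order the refined polynomials correctly, write the append-a-letter recurrence as an explicit nonnegative linear transformation, and verify that this transformation satisfies the hypotheses of an interlacing-preservation criterion (the compatible-polynomials machinery). None of that is carried out, and ``I expect the preservation of interlacing to be the main obstacle'' concedes precisely the point at issue --- a wrong choice of refinement or ordering makes the induction fail. As it stands your proposal establishes the displayed formula but only gestures at the real-rootedness and unimodality claim; since the paper merely cites this fact, nothing downstream is affected, but as a standalone proof the second assertion remains unproven.
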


The theory of lecture hall polytopes and lecture hall partitions is extensive \cite{LectureHallMath} and many questions have been answered. 
Some particular motivating work includes the thorough study of Gorenstein properties for {\it $\sb$-lecture hall cones} \cite{GorensteinCones}. These results imply Ehrhart theoretic properties of the {\it rational $\sb$-lecture hall polytopes} ${\bf R}_{(\sb)}^d$, but do not imply the same properties for $\Pb_{(\sb)}^d$. 
Additionally, the existence of a unimodular triangulation for the $\sb$-lecture hall cone of $\sb=(1,2,\cdots,d)$ was recently shown \cite{BBKSZ}.
 However, showing the existence or nonexistence of a unimodular triangulation of $\Pb_{(\sb)}^d$ for most $\sb$ is still an open question. 
This motivates the following unanswered questions:  

	\begin{itemize}
	\item For what $\sb$ is $\Pb_d^{(\sb)}$ Fano, reflexive, or Gorenstein?
	\item For what $\sb$ does $\Pb_d^{(\sb)}$  satisfies the integer decomposition property?
	\item If $\Pb_d^{(\sb)}$ satisfies the integer decomposition property, for what conditions will it admit a unimodular triangulation?
	\end{itemize}

In this paper, we answer these questions for particular large classes of $\sb$ as progress towards a complete characterization. 
First we consider  $\Pb_d^{(\sb)}$ when $\sb$ is a monotonic sequence. 
We will show necessary and sufficient conditions for Fano and reflexive in the case when $\sb$ is a sequence with $0\leq s_{i+1}-s_i\leq 1$ for all $0\leq i\leq d-1$ (or equivalently   $0\leq s_{i}-s_{i-1}\leq 1$ for all $0\leq i\leq d-1$), the case when $\sb$ is a strictly monotonic sequence, and the case when $\sb$ is constant then strictly increasing. 
In the two latter cases, we can also provided necessary and sufficient conditions for when $\Pb_d^{(\sb)}$ is Gorenstein. We continue to show that $\Pb_d^{(\sb)}$ satisfies the integer decomposition property for all monotonic $\sb$ and show that in some special cases, we can prove that $\Pb_d^{(\sb)}$ admits a unimodular triangulation, which is a stronger condition. Furthermore, if we have two lecture hall polytopes $\Pb_d^{(\sb)}$ and $\Pb_e^{(\tb)}$ which are Gorenstein and/or satisfies the integer decomposition property, we can construct a $(d+e+1)$-dimensional lecture hall polytope with the respective property.
 
\section{Fano, Reflexive, and Gorenstein}
Suppose that $\sb$ is a monotonic sequence. We give necessary and sufficient conditions for when $\Pb_d^{(\sb)}$ is Fano or reflexive in the special cases of  $\sb$ a strictly increasing sequence and $\sb$ a sequence which increases by at most one. In the case of strictly increasing, we can also find necessary and sufficient conditions for when $\Pb_d^{(\sb)}$is Gorenstein.

\begin{Remark}
All of the results in this section can be rephrased in the obvious way for when $\sb$ is decreasing. This follows from the observation $\Pb_d^{(s_1,s_2,\ldots,s_d)}\iso  \Pb_d^{(s_d,s_{d-1},\ldots,s_1)}$.
\end{Remark} 

\subsection{Strictly increasing $\sb$-sequences}
Suppose that $\sb=(s_1,s_2,\ldots,s_d)$ is a sequence of positive integers such that $s_i\lneq s_{i+1}$ for all $i\in\{1,2,\ldots,d-1\}$. We have the following necessary and sufficient conditions for when $\Pb_d^{(\sb)}$ is translation equivalent to a Fano polytope.

\begin{Theorem}\label{strictincreaseFano}
Suppose $\sb$ is a sequence of strictly increasing positive integers. 
Then $\Pb_d^{(\sb)}$ is translation equivalent to a Fano polytope if and only if $s_1=2$ and $s_{i+1}\leq 2s_{i}$ for all $1\leq i \leq d-1$. Moreover, if $\Pb_d^{(\sb)}$ is Fano, the unique interior point of $\Pb_d^{(\sb)}$ is $(s_d-1,s_{d-1}-1,\ldots,s_2-1,s_1-1)^T$.
\end{Theorem}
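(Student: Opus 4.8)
The plan is to reduce the statement to a count of interior lattice points and then exhibit them explicitly. Since $\Pb_d^{(\sb)}$ is a lattice $d$-simplex whose facets are $x_1=0$, $x_d=s_d$, and $x_i/s_i = x_{i+1}/s_{i+1}$ for $1\le i\le d-1$, its relative interior is cut out by the strict chain $0 < x_1/s_1 < x_2/s_2 < \cdots < x_d/s_d < 1$. Thus a lattice point $\eb$ is interior if and only if $1\le e_i\le s_i-1$ for all $i$ and $e_i/s_i < e_{i+1}/s_{i+1}$ for all $i$. Because translation by an integer vector carries lattice points to lattice points, $\Pb_d^{(\sb)}$ is translation equivalent to a Fano polytope precisely when it contains exactly one interior lattice point. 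Hence the whole theorem reduces to showing that this count equals one exactly under the stated hypotheses, the surviving point being $\zb:=(s_1-1,s_2-1,\ldots,s_d-1)$ (written in the matrix convention as $(s_d-1,\ldots,s_1-1)^T$).

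First I would verify that $\zb$ is interior whenever $s_1\ge 2$: the inequality $(s_i-1)/s_i < (s_{i+1}-1)/s_{i+1}$ is equivalent to $s_i<s_{i+1}$, which holds by strict monotonicity, while $0<(s_1-1)/s_1$ requires only $s_1\ge 2$. This also pins down the candidate interior point for the ``moreover'' clause.

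For the forward implication I would argue contrapositively, producing a second interior point whenever a hypothesis fails. If $s_1=1$ there are no interior points at all, since the constraint $1\le e_1\le s_1-1$ is vacuous; and if $s_1\ge 3$, then lowering the first coordinate of $\zb$ by one gives a second interior point, as $(s_1-2)/s_1 < (s_2-1)/s_2$ is equivalent to $s_1<2s_2$, which holds automatically. Hence the Fano condition forces $s_1=2$. Similarly, if $s_{j+1}>2s_j$ for some $j$, then lowering the $(j{+}1)$st coordinate of $\zb$ to $s_{j+1}-2$ yields another interior point: the decisive inequality $(s_j-1)/s_j < (s_{j+1}-2)/s_{j+1}$ is exactly $2s_j<s_{j+1}$, while the neighboring constraint $(s_{j+1}-2)/s_{j+1} < (s_{j+2}-1)/s_{j+2}$ reduces to $s_{j+1}<2s_{j+2}$ and holds by monotonicity. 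This establishes $s_{i+1}\le 2s_i$ for all $i$.

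For the converse, assuming $s_1=2$ and $s_{i+1}\le 2s_i$, I would show by induction on the coordinate index that every interior point $\eb$ equals $\zb$. The base case is forced, since $s_1=2$ leaves only $e_1=1=s_1-1$. For the inductive step, if $e_i=s_i-1$, then the strict inequality $e_i/s_i < e_{i+1}/s_{i+1}$ gives $e_{i+1} > s_{i+1}-s_{i+1}/s_i \ge s_{i+1}-2$, so $e_{i+1}\ge s_{i+1}-1$; combined with $e_{i+1}\le s_{i+1}-1$ this forces $e_{i+1}=s_{i+1}-1$. Thus $\zb$ is the unique interior lattice point, completing both the equivalence and the identification of the interior point. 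I do not anticipate a serious obstacle: the argument is elementary once the interior is described by the strict chain, and the only point requiring care is bookkeeping of strict versus non-strict inequalities, in particular the boundary case $s_{i+1}=2s_i$, where strictness of the chain inequality is precisely what keeps $e_{i+1}\ge s_{i+1}-1$.
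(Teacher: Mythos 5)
Your proof is correct and follows essentially the same route as the paper's: both reduce the Fano condition to counting lattice points satisfying the strict chain $0<e_1/s_1<\cdots<e_d/s_d<1$ and verify the identical elementary inequalities, with your induction for uniqueness matching the paper's observation that $\frac{s_i-1}{s_i}<\frac{s_{i+1}-2}{s_{i+1}}$ can never hold when $s_{i+1}\le 2s_i$. The only cosmetic differences are that you describe the interior of the simplex directly instead of invoking the Savage--Schuster formula $\delta_d=\#\{\eb : \asc\,\eb=d\}$, and in the necessity direction you exhibit the second interior point inside $\Pb_d^{(\sb)}$ itself rather than inside an auxiliary sub-polytope.
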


\begin{proof}
Suppose that $\sb$ is a sequence with the property that $s_1=2$ and $s_{i+1}\leq 2s_{i}$. We will show that this implies that $\Pb_d^{(\sb)}$ is Fano. It is sufficient to show that $\Ib_d^{(\sb)}$ has exactly 1 inversion sequence $\eb$ such that $\asc \,\eb=d$, as this implies that $\delta_d ( \Pb_d^{(\sb)})=1$ by Lemma \ref{deltapoly}.
If we let $\eb=(s_1-1,s_2-1,s_3-1,\ldots, s_d-1)$, we should note that $\asc\,\eb=d$ because 
	\[
	\frac{s_i-1}{s_i}<\frac{s_{i+1}-1}{s_{i+1}}
	\]
follows from the fact that $-s_{i+1}< -s_{i}$ which is true by assumption. To claim that this is the only such inversion sequence note that 
	\[
	\frac{s_i-1}{s_i}<\frac{s_{i+1}-2}{s_{i+1}}
	\]
is never true for any $i$ because this would imply that $-s_{i+1}<-2s_i$ which is false by assumption. Moreover, in order for $\eb$ to have an ascent in position 1, we need $e_1=1=s_1-1$, so it follows that there is a single inversion sequence of this type. Hence,  Additionally, we should note that because we have	
	\[
	0<\frac{s_1-1}{s_1}<\frac{s_2-1}{s_2}<\cdots<\frac{s_d-1}{s_d}<1
	\]
it follows that the point $(s_d-1,s_{d-1}-1,\ldots,s_2-1,s_1-1)^T$ does not lie on a supporting hyperplane and is hence the unique interior point of $\Pb_d^{(\sb)}$.

Now, suppose that $\sb$ is not of the prescribed form. We will show that $\Pb_d^{(\sb)}$ is not Fano. There are three possible cases:
	\begin{enumerate}[(i)]
	\item $s_1=1$;
	\item $s_1\geq 3$;
	\item $s_1=2$ and $s_{i+1}>2s_i$ for some $1\leq i\leq d-1$.
	\end{enumerate}
Each of these cases preclude $\Pb_d^{(\sb)}$ from being Fano.

For (i), if $s_1=1$, it is clear from the vertex description of the polytope that $\Pb_d^{(\sb)}\cong \Pyr(\Pb_{d-1}^{(s_2,s_3,\ldots,s_d)})$	and hence $\delta_d(\Pb_d^{(\sb)})=0$. 

For (ii), if $s_1\geq 3$, it is easy to see that $\Pb_d^{(3,4,\ldots,d+2)}\subseteq \Pb_d^{(\sb)}$. We can see that $\delta_d(\Pb_d^{(3,4,\ldots,d+2)})\geq 2$ because both the inversion sequences $\eb=(1,2,\ldots,d)$ and $\eb'=(2,3,\ldots, d+1)$ have the property $\asc\,\eb=\asc\,\eb'=d$. So, $\Pb_d^{(3,4,\ldots,d+2)}$ has at least 2 interior points, which must also be interior points of $\Pb_d^{(\sb)}$, meaning it is not Fano. 

For (iii), if we have $s_1=2$ but that there exists at least one $1\leq i\leq d-1$ such that $s_{i+1}>2s_{i}$. If there exist multiple such $i$, choose the smallest. We can see that $\Pb_d^{(\tb)}\subseteq\Pb_d^{(\sb)}$, where $\tb=(s_1,\ldots,s_i,2s_i+1,2s_i+2,\ldots,2s_i+(d-i+1))$. If we consider this smaller polytope, we can again ascertain that $\delta_d(\Pb_d^{(\tb)})\geq 2$. Note that $\eb=(s_1-1,\ldots s_i-1, 2s_i,2s_i+1,\ldots, 2s_i+(d-i))$ has $\asc\,\eb=d$ as 
	\[
	\frac{s_i-1}{s_i}< \frac{2s_i}{2s_i+1}
	\] 
follows from $-s_i-1<0$ and the other inequalities follow from previous arguments. However, $\eb'=(s_1-1,\ldots s_i-1, 2s_i-1,2s_i,\ldots, 2s_i+(d-i-1))$ also has the property $\asc\,\eb'=d$ because 
	\[
	\frac{s_i-1}{s_i}< \frac{2s_i-1}{2s_i+1}
	\]
is follows from $-1<0$ and 
	\[
	\frac{2s_i+k}{2s_i+k+2}<\frac{2s_i+k+1}{2s_i+k+3}
	\]	
follows from $0<4s_i+2k+6$. Hence, $\Pb_d^{(\tb)}$, and therefore $\Pb_d^{(\sb)}$, has at least two interior points, and is not Fano.
\end{proof}

We can go further to provide necessary and sufficient conditions for when $\Pb_d^{(\sb)}$ is translation equivalent to a reflexive polytope.

\begin{Theorem}\label{strictincreaseReflexive}
Suppose that $\sb$ is a sequence of strictly increasing positive integers such that $\Pb_d^{(\sb)}$ is Fano. Then $\Pb_d^{(\sb)}$ is  reflexive (up to translation) if and only if for each $0\leq i\leq d-1$, $k_i=s_{i+1}-s_i$ has the property $k_i | s_i$ and $k_i | s_{i+1}$.
\end{Theorem}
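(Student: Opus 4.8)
The plan is to work with the geometric definition of reflexivity directly, rather than through the $\delta$-symmetry criterion. By Theorem~\ref{strictincreaseFano}, $\Pb_d^{(\sb)}$ is a Fano polytope whose unique interior lattice point $\mathbf{c}$ has $i$th coordinate $s_i-1$ (in the standard coordinates $x_1,\dots,x_d$). Translating by $-\mathbf{c}$ yields a lattice polytope $\Qc$ whose only interior lattice point is the origin, and by the definition of reflexivity $\Pb_d^{(\sb)}$ is reflexive up to translation exactly when $\Qc^{\vee}$ is a lattice polytope. Since $\Pb_d^{(\sb)}$ has exactly $d+1$ facets, $\Qc$ is a $d$-simplex with the origin in its interior, so polar duality exhibits $\Qc^{\vee}$ as a simplex whose $d+1$ vertices are precisely the vectors $\mathbf{u}_F$ obtained by writing each facet of $\Qc$ as $\langle \mathbf{u}_F,\mathbf{y}\rangle=1$. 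Thus the entire statement reduces to computing these $d+1$ normals and determining when each lies in $\ZZ^d$.

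First I would record the facets of $\Pb_d^{(\sb)}$ coming from the defining chain $0\le x_1/s_1\le\cdots\le x_d/s_d\le 1$: the bottom facet $x_1\ge 0$, the top facet $x_d\le s_d$, and the middle facets $s_i x_{i+1}-s_{i+1}x_i\ge 0$ for $1\le i\le d-1$. Substituting $x_j=y_j+(s_j-1)$ and simplifying, the $i$th middle facet becomes $s_{i+1}y_i-s_i y_{i+1}\le k_i$; dividing by $k_i=s_{i+1}-s_i>0$ puts it in the form $\langle \mathbf{u}_i,\mathbf{y}\rangle=1$, where $\mathbf{u}_i$ is the vector whose $i$th entry is $s_{i+1}/k_i$, whose $(i+1)$st entry is $-s_i/k_i$, and which is zero elsewhere. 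The same substitution turns the top facet into $y_d\le 1$ and, using $s_1=2$ from the Fano hypothesis, the bottom facet into $-y_1\le 1$; both of these normals are integral automatically.

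Consequently $\Qc^{\vee}$ is a lattice polytope if and only if $\mathbf{u}_i\in\ZZ^d$ for every $1\le i\le d-1$, that is, $s_{i+1}/k_i\in\ZZ$ and $s_i/k_i\in\ZZ$, which is exactly $k_i\mid s_i$ and $k_i\mid s_{i+1}$. Note that the relation $s_{i+1}=s_i+k_i$ makes these two divisibilities equivalent, and that the case $i=0$ of the asserted condition is vacuous under the convention $s_0=1$ (so $k_0=s_1-1=1$), matching the always-integral bottom facet; this accounts for the index range $0\le i\le d-1$ in the statement. Both implications then follow at once: the forward direction reads off integrality of the $\mathbf{u}_i$ from the lattice property of $\Qc^{\vee}$, while the reverse direction reconstructs the lattice dual from the divisibility hypotheses.

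The computations here are routine; the only point demanding genuine care is the reduction itself, namely the fact that for a Fano simplex reflexivity is equivalent to the integrality of every facet normal normalized to lattice distance one from the interior point (equivalently, that its polar dual simplex has lattice vertices), together with the bookkeeping needed to align the boundary facets with the stated index range $0\le i\le d-1$.
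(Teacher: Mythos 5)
Your proposal is correct and follows essentially the same route as the paper: translate by the interior point $(s_d-1,\ldots,s_1-1)^T$ guaranteed by Theorem~\ref{strictincreaseFano}, write down the $\Hc$-representation of the translated polytope (with middle facets $s_{i+1}y_i - s_i y_{i+1}\le k_i$ and boundary facets $y_d\le 1$, $-y_1\le 1$), and read off that the dual is a lattice polytope exactly when $k_i\mid s_i$ and $k_i\mid s_{i+1}$. Your added remarks --- that the two divisibilities are equivalent via $s_{i+1}=s_i+k_i$, and that the $i=0$ case is vacuous under the convention $s_0=1$ --- are correct refinements that the paper leaves implicit.
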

\begin{proof}
If  $\Pb_d^{(\sb)}$ is Fano, by Theorem \ref{strictincreaseFano} we know that the interior point is $(s_d-1,s_{d-1}-1,\ldots,s_2-1,s_1-1)^T$. If we translate $\Pb_d^{(\sb)}$ such that the interior point is the origin, the resulting polytope has vertices given by the columns of 
	\[
	\begin{bmatrix}
	1-s_d & 1 & 1 & 1 & \cdots & 1\\
	1-s_{d-1} & 1-s_{d-1} & 1 & 1 & \cdots & 1\\
	1-s_{d-2} & 1-s_{d-2} & 1-s_{d-2} & 1 & \cdots & 1\\
	\vdots & \vdots & \vdots & \ddots & & \vdots\\
	-1 & -1 & -1 & \cdots & -1 & 1\\
	\end{bmatrix}
	\]
This polytope has $\Hc$-representation
	\begin{itemize}
	\item $x_d\leq 1$
	\item $s_{i+1}x_{i}-s_{i}x_{i+1}\leq s_{i+1}-s_i$ for all $1\leq i\leq d-1$
	\item $-x_1\leq 1$
	\end{itemize}
using the convention of $x_d$ given by the first row and so on with $x_1$ given by the last row,  as it is clear that each vertex satisfies $d$ equations with equality and 1 with strict inequality.  

It follows then that $(\Pb_d^{(\sb)})^\vee$ is a lattice polytope if and only if $k_i | s_i$ and $k_i | s_{i+1}$ where $k_i=s_{i+1}-s_i$.			
\end{proof}

We have the following corollary.

\begin{Corollary}
Suppose $\sb$ is a sequence of strictly increasing positive integers. 
Then $\Pb_d^{(\sb)}$ is Gorenstein of index 2 if and only $\sb=(\frac{t_1}{2},\frac{t_2}{2},\ldots, \frac{t_d}{2})$ where $\tb=(t_1,\ldots,t_d)$ is a sequence such that $\Pb_d^{(\tb)}$ is reflexive.
Moreover, there is no sequence $\sb$ of strictly increasing positive integers such that $\Pb_d^{(\sb)}$ is Gorenstein of index $\geq 3$.
\end{Corollary}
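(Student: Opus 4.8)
The plan is to exploit the characterization of Gorenstein polytopes in terms of the $\delta$-polynomial stated in the introduction: $\Pb_d^{(\sb)}$ is Gorenstein of index $c$ precisely when $\delta(\Pb_d^{(\sb)},\lambda)$ is symmetric with $\deg(\delta(\Pb_d^{(\sb)},\lambda))=d-c+1$. By Lemma~\ref{deltapoly}, the degree of the $\delta$-polynomial equals the maximal ascent $\max_{\eb}\asc\,\eb$, which is always at most $d$, and symmetry of the $\delta$-polynomial is exactly the reflexivity criterion. So the statement is really a translation of the data $(c,\text{symmetry})$ into the elementary conditions on $\sb$ that Theorems~\ref{strictincreaseFano} and~\ref{strictincreaseReflexive} already encode.

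First I would prove the index-$2$ equivalence. By definition, $\Pb_d^{(\sb)}$ is Gorenstein of index $2$ if and only if $2\Pb_d^{(\sb)}$ is unimodularly equivalent to a reflexive polytope. The key observation is the scaling identity $2\Pb_d^{(\sb)}=\Pb_d^{(2\sb)}$, which follows directly from the defining inequalities $0\le x_1/s_1\le\cdots\le x_d/s_d\le 1$ (replacing each $s_i$ by $2s_i$ dilates the polytope by a factor of $2$). Setting $\tb=2\sb=(2s_1,\dots,2s_d)$, the condition that $2\Pb_d^{(\sb)}=\Pb_d^{(\tb)}$ be reflexive is exactly the condition that the strictly increasing integer sequence $\tb$ yields a reflexive lecture hall polytope. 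Writing $\sb=(t_1/2,\dots,t_d/2)$ then recovers the stated form, so the equivalence is immediate once the scaling identity is in hand.

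For the second assertion I would argue by contradiction using the degree constraint. If $\Pb_d^{(\sb)}$ were Gorenstein of index $c\ge 3$, the characterization forces $\deg(\delta(\Pb_d^{(\sb)},\lambda))=d-c+1\le d-2$, so in particular $\delta_d=\delta_{d-1}=0$. But the analysis in the proof of Theorem~\ref{strictincreaseFano} shows that for a strictly increasing sequence one can always produce an inversion sequence of large ascent: the sequence $\eb=(s_1-1,s_2-1,\dots,s_d-1)$ satisfies $e_i/s_i<e_{i+1}/s_{i+1}$ for every $i$ whenever $s_i<s_{i+1}$, giving $\asc\,\eb=d$ (at least at all interior positions), so the top coefficient cannot vanish. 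More carefully, since $\sb$ is strictly increasing one checks $\delta_{d}\ge 1$ or $\delta_{d-1}\ge 1$ directly, contradicting $\deg\delta\le d-2$. Thus no strictly increasing $\sb$ can give index $c\ge 3$.

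The main obstacle is the second part: I must rule out high-index Gorensteinness uniformly over all strictly increasing $\sb$, not just the Fano ones. The cleanest route is to show that for any strictly increasing sequence the $\delta$-polynomial has degree at least $d-1$, i.e.\ $\delta_{d-1}\ne 0$, by exhibiting an explicit inversion sequence with $\asc\,\eb\ge d-1$ (for instance, slightly perturbing the sequence $(s_1-1,\dots,s_d-1)$ used above, or using that the strict monotonicity $s_i<s_{i+1}$ guarantees an ascent at position $i$ for the ``maximal'' inversion sequence). Since a Gorenstein polytope of index $c$ needs $\deg\delta=d-c+1$, having $\deg\delta\ge d-1$ forces $c\le 2$, which is exactly the desired conclusion. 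The delicate point is handling the boundary position $i=1$ (with the convention $e_0=1$, $s_0=1$) to confirm the ascent count is genuinely at least $d-1$ for every admissible strictly increasing $\sb$.
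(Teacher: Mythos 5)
Your proposal is correct, and the first half (the index-$2$ equivalence via the scaling identity $2\Pb_d^{(\sb)}=\Pb_d^{(2s_1,\ldots,2s_d)}$) is exactly the paper's argument. For the nonexistence of index $\geq 3$, however, you take a genuinely different route. The paper stays with the scaling observation: if $\Pb_d^{(\sb)}$ were Gorenstein of index $c$, then $\Pb_d^{(cs_1,\ldots,cs_d)}$ would be reflexive, hence Fano, and Theorem~\ref{strictincreaseFano} forces its first entry to equal $2$; the equation $cs_1=2$ has no solution with $s_1\in\ZZ_{>0}$ once $c\geq 3$. You instead bound the degree of the $\delta$-polynomial from below: by Lemma~\ref{deltapoly} the inversion sequence $\eb=(s_1-1,\ldots,s_d-1)$ has ascents at every position $1\leq i\leq d-1$ because $s_i<s_{i+1}$ is equivalent to $\frac{s_i-1}{s_i}<\frac{s_{i+1}-1}{s_{i+1}}$, so $\deg\delta(\Pb_d^{(\sb)},\lambda)\geq d-1$ regardless of whether the initial position is an ascent, and Stanley's criterion $\deg\delta=d-c+1$ then forces $c\leq 2$. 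Both arguments are valid; the paper's is a one-line consequence of Theorem~\ref{strictincreaseFano}, while yours is more self-contained (it needs only Lemma~\ref{deltapoly} and the degree characterization of Gorenstein, not the full Fano classification) and the "delicate boundary position" you worry about is harmless precisely because you only need $d-1$ ascents, not $d$.
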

\begin{proof}
This follows immediately from the observation that $r\Pb_d^{(\sb)}=\Pb_d^{(rs_1,rs_2,\ldots,rs_d)}$ and the condition that $s_1=2$ when $\Pb_d^{(\sb)}$ is reflexive.
\end{proof}

\subsection{Constant then strictly increasing $\sb$-sequences}
Suppose that we have a sequence of positive integers $\sb=(s_1,s_2,\ldots,s_i,s_{i+1},\ldots,s_d)$ such that $s_1=s_2=\cdots=s_i$ and $s_j<s_{j+1}$ for all $j\geq i$. We will given necessary and sufficient conditions for when $\Pb_d^{(\sb)}$  is translation equivalent to a Fano polytope for such sequences.

\begin{Theorem}\label{ConstantThenStrictFano}
Suppose that $\sb$ is a sequence such that  $s_1=\cdots=s_i$ for some $1 \leq i \leq d$ ,and $s_j<s_{j+1}$ for all $i\leq j\leq d-1$.  The polytope $\Pb_d^{(\sb)}$ is translation equivalent to a Fano polytope if and only if $s_1=\cdots=s_i=i+1$ and for all $j\geq i$, $s_{j+1}\leq 2s_j$. Moreover, the unique interior point is $(s_d-1,\ldots,s_{i+1}-1, i, i-1,\ldots,2,1)^T$.
\end{Theorem}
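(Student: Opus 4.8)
The plan is to work entirely with the combinatorial model provided by Lemma~\ref{deltapoly}. Since $\Pb_d^{(\sb)}$ is a lattice polytope having the origin as a vertex, it is translation equivalent to a Fano polytope precisely when it has a single interior lattice point, i.e. when $\delta_d(\Pb_d^{(\sb)}) = 1$. By Lemma~\ref{deltapoly}, $\delta_d$ equals the number of $\eb \in \Ib_d^{(\sb)}$ with $\asc\,\eb = d$, and since there are only $d$ positions, every position $0,1,\dots,d-1$ must then be an ascent; these are exactly the integer sequences satisfying
\[
0 < \frac{e_1}{s_1} < \frac{e_2}{s_2} < \cdots < \frac{e_d}{s_d} < 1,
\]
which are in turn exactly the interior lattice points of $\Pb_d^{(\sb)}$. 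Writing $c := s_1 = \cdots = s_i$, the decisive structural observation is that the constant block forces $0 < e_1 < e_2 < \cdots < e_i < c$; that is, $e_1,\dots,e_i$ must be $i$ distinct integers drawn from $\{1,\dots,c-1\}$.

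For sufficiency, assume $c = i+1$ and $s_{j+1} \le 2s_j$ for all $j \ge i$. Then $\{1,\dots,c-1\} = \{1,\dots,i\}$ has exactly $i$ elements, so the constant block is forced to be $(e_1,\dots,e_i) = (1,2,\dots,i)$, and in particular $e_i = i = s_i - 1$ is maximal. I would then reuse the inequality from the proof of Theorem~\ref{strictincreaseFano}, namely that $\frac{s_j-1}{s_j} < \frac{s_{j+1}-2}{s_{j+1}}$ is equivalent to $s_{j+1} > 2s_j$ and hence fails under our hypothesis: once $e_j = s_j-1$, an ascent at position $j$ forces $e_{j+1} = s_{j+1}-1$. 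Inducting upward from $e_i = s_i-1$ yields $e_j = s_j - 1$ for every $j \ge i$, so the only interior point is $\eb^{*} = (1,\dots,i,\,s_{i+1}-1,\dots,s_d-1)$. One checks it genuinely is a chain --- the only nonautomatic step is the junction $\frac{i}{i+1} < \frac{s_{i+1}-1}{s_{i+1}}$, which holds because $s_{i+1} > i+1$ --- and reversing coordinates for the matrix convention gives the asserted interior point $(s_d-1,\dots,s_{i+1}-1,i,i-1,\dots,1)^T$. Thus $\delta_d = 1$.

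For necessity I would show that failure of either hypothesis forces $\delta_d \ne 1$, splitting into three exhaustive cases. If $c \le i$, then $\{1,\dots,c-1\}$ has fewer than $i$ elements, no constant block exists, and $\delta_d = 0$. If $c \ge i+2$, I keep the maximal tail $e_j = s_j-1$ for $j \ge i+1$ and let $e_i$ range over $\{i, i+1\}$ (both admissible since $i+1 \le c-1$), producing two distinct chains; the only inequality to check is the junction $\frac{i+1}{c} < \frac{s_{i+1}-1}{s_{i+1}}$, which reduces by cross-multiplication to $(c-i-1)s_{i+1} > c$ and holds since $c-i-1 \ge 1$ and $s_{i+1} > c$. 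Finally, if $c = i+1$ but $s_{j+1} > 2s_j$ for some $j \ge i$, choose the smallest such $j$; the constant block and, by the propagation argument applied below the jump, the segment up to $e_j = s_j-1$ are forced, but now both $e_{j+1} = s_{j+1}-1$ and $e_{j+1} = s_{j+1}-2$ extend to full chains, the second being admissible precisely because $s_{j+1} > 2s_j$ makes $\frac{s_j-1}{s_j} < \frac{s_{j+1}-2}{s_{j+1}}$ hold. Either branch completes to a chain via the maximal tail, giving two interior points and $\delta_d \ge 2$. The main obstacle is the bookkeeping in this last step: verifying the branch inequalities by cross-multiplication and separately treating the degenerate endpoints $i = d$ (empty tail, where instead two distinct constant blocks such as $(1,\dots,d)$ and $(1,\dots,d-1,d+1)$ must be exhibited) and $j+1 = d$ (empty upper tail).
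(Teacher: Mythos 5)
Your proof is correct and follows essentially the same strategy as the paper: both reduce via Lemma~\ref{deltapoly} to counting inversion sequences with $d$ ascents, observe that the constant block forces $(e_1,\dots,e_i)=(1,\dots,i)$ when $c=i+1$, and propagate $e_j=s_j-1$ up the strictly increasing tail using the equivalence of $\frac{s_j-1}{s_j}<\frac{s_{j+1}-2}{s_{j+1}}$ with $s_{j+1}>2s_j$. The only divergence is in the necessity cases $c\geq i+2$ and $s_{j+1}>2s_j$: the paper passes to auxiliary subpolytopes $\Pb_d^{(\tb)}\subseteq\Pb_d^{(\sb)}$ and transfers their interior points, whereas you exhibit two ascent-$d$ inversion sequences directly inside $\Ib_d^{(\sb)}$ (verifying the junction and branch inequalities by cross-multiplication, and treating the degenerate endpoints $i=d$ and $j+1=d$ separately) --- a more self-contained route that avoids any containment claim.
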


\begin{proof}
Suppose that $\sb$ is a sequence of this form such that $s_1=\cdots=s_i=i+1$ and $s_{j+1}\leq 2s_j$ for all $j\geq i$. We will show that $\delta_d(\Pb_d^{(\sb)})=1$ by showing that there is a unique inversion sequence $\eb$ such that $\asc\,\eb=d$. Let $\eb=(1,2,\ldots,i,s_{i+1}-1,s_{i+2}-1,\ldots,s_d-1)$. It is clear that this sequence has $d$ ascents, as $\frac{c}{i+1}<\frac{c+1}{i+1}$ for all $1\leq c\leq i$,
	\[
	\frac{s_j-1}{s_j}<\frac{s_{j+1}-1}{s_{j+1}}
	\] 
for all $j>i$ because $s_j<s_{j+1}$, and 
	\[
	\frac{i}{i+1}<\frac{s_{i+1}-1}{s_{i+1}}=1-\frac{1}{s_{i+1}}
	\]	
because $s_{i+1}>i+1$. To claim that this is the unique such inversion sequence, note that the only way to obtain an ascent each of the first $i$ positions is have the sequence begin $1,2,\ldots, i$. From previous work, we know that 
	\[
	\frac{s_j-1}{s_j}<\frac{s_{j+1}-2}{s_{j+1}}
	\]
cannot hold by the assumption $s_{j+1}\leq 2s_j$ for all $j\geq i$. This ensures that no other such inversion sequence with $d$ ascents exists. Thus, we have $\delta_d(\Pb_d^{(\sb)})=1$ so the polytope is Fano. Additionally, because we have 
	\[
	0<\frac{1}{i+1}<\cdots<\frac{i}{i+1}<\frac{s_{i+1}-1}{s_{i+1}}<\cdots<\frac{s_d-1}{s_d}<1
	\]
the point $(s_d-1,\ldots,s_{i+1}-1, i, i-1,\ldots,2,1)^T$ is in $\Pb_d^{(\sb)}$ and cannot lie on any supporting hyperplane and is hence the unique interior point.

Now, suppose that $\sb$ does not have the desired properties. We will show that $\Pb_d^{(\sb)}$ is not Fano. There are 3 possibilities:
	\begin{enumerate}[(i)]
	\item $s_1=\cdots=s_i\leq i$;
	\item $s_1=\cdots=s_i\geq i+2$;
	\item $s_1=\cdots=s_i=i+1$, but there exists some $j\geq i$ such that $2s_j<s_{j+1}$
	\end{enumerate}		
Each of these cases preclude $\Pb_d^{(\sb)}$ from being Fano. 

For (i), note that it is impossible for there to be an ascent in each of the first $i$ positions. Hence, we have $\delta_d(\Pb_d^{(\sb)})=0$.

For (ii), notice that $\Pb_d^{(i+2,\ldots,i+2,i+3,i+4,\ldots,d+2)}\subset \Pb_d^{(\sb)}$. If we consider inversion sequences in $\Ib_d^{(i+2,\ldots,i+2,i+3,i+4,\ldots,d+2)}$, we have that both $\eb=(1,2,\ldots,i,i+1,i+2,\ldots,d)$ $\eb'=(2,3,\ldots,i+1, i+2,i+3,\ldots,d+1)$ have the property $\asc\,\eb=\asc\,\eb'=d$ and hence $\delta_d(\Pb_d^{(i+2,\ldots,i+2,i+3,i+4,\ldots,d+2)})\geq 2$, which implies it has at least two interior points, which are also interior points of $\Pb_d^{(\sb)}$.

For (iii), note that $\Pb_d^{(2,3,\ldots,i+1,s_{i+1},\ldots,s_d)}\subset \Pb_d^{(\sb)}$. 
By the proof of Theorem \ref{strictincreaseFano}, we know that $\delta_d(\Pb_d^{(2,3,\ldots,i+1,s_{i+1},\ldots,s_d)})\geq 2$, which implies that 
$\delta_d(\Pb_d^{(\sb)})\geq 2$. 
\end{proof}

Now that we have a complete characterization of when $\Pb_d^{(\sb)}$ is Fano for $\sb$ of this type, we can now give necessary and sufficient conditions for when they are reflexive.

\begin{Theorem}\label{ConstantThenStrictReflexive}
Suppose that $\sb$ is a sequence such that  $s_1=\cdots=s_i$ for some $1 \leq i \leq d$ ,and $s_j<s_{j+1}$ for all $i\leq j\leq d-1$
and suppose that $\Pb_d^{(\sb)}$ is Fano. Then $\Pb_d^{(\sb)}$ is reflexive if and only if for all $i\leq j\leq d-1$ we have $k_j|s_j$ and $k_j|s_{j+1}$ where $k_j=s_{j+1}-s_j$.
\end{Theorem}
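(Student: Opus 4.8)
The plan is to mirror the proof of Theorem \ref{strictincreaseReflexive} almost verbatim, taking advantage of the fact that the interior point of the Fano polytope $\Pb_d^{(\sb)}$ is explicitly known from Theorem \ref{ConstantThenStrictFano} to be $(s_d-1,\ldots,s_{i+1}-1,i,i-1,\ldots,2,1)^T$. First I would translate $\Pb_d^{(\sb)}$ so that this interior point becomes the origin, and write down the vertex matrix of the translated polytope, obtained by subtracting the interior point coordinates from each column of the original vertex matrix. Because the first $i$ values of $\sb$ are constant equal to $i+1$, the bottom $i$ rows of this matrix have a different shape than in the strictly increasing case, so the H-representation must be computed separately for the "constant block" and the "strictly increasing block."

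\emph{Key steps, in order.} First, I would verify that the translated polytope has the H-representation
\begin{itemize}
\item $x_d\leq 1$;
\item $s_{j+1}x_j - s_j x_{j+1}\leq s_{j+1}-s_j=k_j$ for all $i\leq j\leq d-1$;
\item $x_j - x_{j+1}\leq 1$ for all $1\leq j\leq i-1$ (the facets coming from the constant block, where $s_j=s_{j+1}=i+1$);
\item $-x_1\leq 1$.
\end{itemize}
The inequalities in the constant block simplify because $s_j=s_{j+1}$ forces the coefficient inequality $s_{j+1}x_j-s_jx_{j+1}\leq k_j$ to collapse to $x_j-x_{j+1}\leq 1$, which already has integer (indeed unit) coefficients and hence contributes no obstruction to the dual being a lattice polytope. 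As in the earlier proof, I would confirm the H-representation is correct by checking that each vertex satisfies exactly $d$ of these inequalities with equality and the remaining one strictly, so these are precisely the supporting hyperplanes of the facets.

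Next, recall that a Fano polytope (translated so its unique interior point is the origin) is reflexive precisely when every facet-supporting hyperplane $\langle a, x\rangle = 1$ has $a\in\ZZ^d$; equivalently, when each defining inequality can be normalized to have right-hand side $1$ with an integer normal vector. The inequalities $x_d\leq 1$, $-x_1\leq 1$, and the constant-block inequalities $x_j-x_{j+1}\leq 1$ already have this form with integer normals. The only nontrivial inequalities are $s_{j+1}x_j-s_jx_{j+1}\leq k_j$ for $i\leq j\leq d-1$; dividing by $k_j$, these become lattice conditions exactly when $k_j\mid s_j$ and $k_j\mid s_{j+1}$, which yields the stated criterion. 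Since in the constant block $k_j=0$ is not a valid divisibility condition and those facets are already unimodular, the condition is correctly restricted to $i\leq j\leq d-1$.

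\emph{Main obstacle.} The only genuinely delicate point is the correct derivation of the H-representation across the junction between the constant block and the strictly increasing block, and confirming that the facets arising from the constant segment never impose divisibility conditions. I expect no surprises in the strictly increasing portion since it is identical to Theorem \ref{strictincreaseReflexive}; the care lies in verifying that the transitional inequality at $j=i$ (where $s_i=i+1$ but $s_{i+1}>i+1$) already fits the general pattern $s_{i+1}x_i-s_ix_{i+1}\leq k_i$, so that the single condition $k_i\mid s_i,\ k_i\mid s_{i+1}$ covers the boundary correctly. Once the H-representation is established, the reflexivity characterization follows immediately from the lattice-dual criterion exactly as before.
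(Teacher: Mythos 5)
Your proposal follows the paper's proof essentially verbatim: translate by the interior point from Theorem \ref{ConstantThenStrictFano}, compute the same $\Hc$-representation (with the constant-block facets $x_j-x_{j+1}\leq 1$ and the strictly increasing facets $s_{j+1}x_j-s_jx_{j+1}\leq k_j$), verify it vertex-by-vertex, and read off the lattice-dual criterion. The approach and all key steps match the paper's argument.
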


\begin{proof}
By Theorem \ref{ConstantThenStrictFano}, we know that the interior point is $(s_d-1,\ldots,s_{i+1}-1, i, i-1,\ldots,2,1)^T$. If we translate $\Pb_{d}^{(\sb)}$ so the interior point is the origin, the resulting polytope has vertices given as the columns of 
	\[
	\begin{bmatrix}
	1-s_d & 1 & 1 & \cdots & 1 & 1 & 1 & \cdots & 1 & 1\\
	1-s_{d-1} & 1-s_{d-1} & 1 & \cdots & 1 & 1 & 1 & \cdots & 1 & 1\\ 
	1-s_{d-2} & 1- s_{d-2} & 1-s_{d-2} & \cdots & 1 & 1 & 1 & \cdots & 1 & 1\\
	\vdots & \vdots &  & \ddots & \vdots & \vdots & \vdots & & \vdots & \vdots\\
	1-s_{i+1} & 1-s_{i+1} & 1-s_{i+1} & \cdots  & 1-s_{i+1} & 1 & 1 & \cdots & 1 & 1\\
	-i & -i & -i & \cdots & -i  & -i & 1 & \cdots & 1 & 1\\
	1-i & 1-i & 1-i & \cdots & 1-i & 1-i & 1-i & \cdots & 2 & 2\\
	\vdots & \vdots & \vdots & & \vdots & \vdots &  & \ddots & & \vdots\\
	-1	& -1 & -1 & \cdots & -1 & -1 & -1 & \cdots & -1 & i-1\\
	\end{bmatrix}.
	\]
This polytope has $\Hc$-representation
	\begin{itemize}
	\item $-x_1\leq 1$;
	\item $x_d\leq 1$;
	\item $x_{j-1}-x_j\leq 1$ for all $2\leq j\leq i$;
	\item $s_{j+1}x_j-s_jx_{j+1}\leq s_{j+1}-s_j$ for all $i\leq j\leq d-1$.
	\end{itemize}
using the convention that $x_d$ is given by the first row and so on with $x_1$ given by the last row. It is easy to see that each column of the matrix satisfies precisely $d$ equations with equality and 1 with strict inequality validating the $\Hc$-representation. It follows then that the dual polytope $(\Pb_d^{(\sb)})^\vee$ is a lattice polytope exactly when $k_j|s_j$ and $k_j|s_{j+1}$ where $k_j=s_{j+1}-s_j$ for $i\leq j\leq d-1$.	
\end{proof}

We can additionally give a description of Gorenstein lecture hall polytopes where $\sb$ is of this form.

\begin{Corollary}
Suppose that $\sb$ is a sequence such that  $s_1=\cdots=s_i$ for some $1 \leq i \leq d$ ,and $s_j<s_{j+1}$ for all $i\leq j\leq d-1$. 
Then $\Pb_d^{(\sb)}$ is Gorenstein of index $k\in\ZZ_{>0}$ if and only if there exists a sequence $\tb=(t_1,\ldots, t_d)$ such that $t_j=ks_j$ for all $j$ (which implies that $t_1=\cdots=t_i$ and $t_j<t_{j+1}$ for $j\geq i$) and $\Pb_d^{(\tb)}$ is reflexive.
\end{Corollary}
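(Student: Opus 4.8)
The plan is to reduce the statement to Theorem \ref{ConstantThenStrictReflexive} by using the behaviour of lecture hall polytopes under dilation, exactly as in the preceding corollary. The only ingredient beyond the definition of Gorensteinness is the identity
\[
r\,\Pb_d^{(\sb)}=\Pb_d^{(rs_1,rs_2,\ldots,rs_d)}\qquad(r\in\ZZ_{>0}),
\]
which I would verify first: substituting $\yb=r\xb$ into the defining chain $0\leq x_1/s_1\leq\cdots\leq x_d/s_d\leq 1$ turns it into $0\leq y_1/(rs_1)\leq\cdots\leq y_d/(rs_d)\leq 1$, so the $r$-th dilate is again a lecture hall polytope, now with sequence $r\sb$. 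Recall also that, by definition, $\Pb_d^{(\sb)}$ is Gorenstein of index $k$ precisely when $k\,\Pb_d^{(\sb)}$ is unimodularly equivalent to a reflexive polytope.

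For the forward implication, assume $\Pb_d^{(\sb)}$ is Gorenstein of index $k$ and set $\tb=(ks_1,\ldots,ks_d)$. By the dilation identity $k\,\Pb_d^{(\sb)}=\Pb_d^{(\tb)}$, so $\Pb_d^{(\tb)}$ is unimodularly equivalent to a reflexive polytope, i.e.\ reflexive up to translation. Since $k>0$, multiplying the relations $s_1=\cdots=s_i$ and $s_j<s_{j+1}$ (for $j\geq i$) by $k$ shows that $\tb$ has the same constant-then-strictly-increasing shape, so it is an admissible sequence and exhibits the required $\tb$. For the converse, suppose such a $\tb=(ks_1,\ldots,ks_d)$ exists with $\Pb_d^{(\tb)}$ reflexive. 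The dilation identity gives $k\,\Pb_d^{(\sb)}=\Pb_d^{(ks_1,\ldots,ks_d)}=\Pb_d^{(\tb)}$, which is reflexive up to translation, hence unimodularly equivalent to a reflexive polytope; by definition $\Pb_d^{(\sb)}$ is Gorenstein of index $k$. One may then invoke Theorem \ref{ConstantThenStrictReflexive} to read off the explicit divisibility criterion: writing $k_j=t_{j+1}-t_j$ for $i\leq j\leq d-1$, the polytope $\Pb_d^{(\tb)}$ is reflexive exactly when $k_j\mid t_j$ and $k_j\mid t_{j+1}$ for all such $j$.

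I do not expect a genuine obstacle here, as the argument is essentially a change of variables. The one point requiring care is matching the two formulations of reflexivity appearing in the paper: \emph{reflexive up to translation}, as used in Theorem \ref{ConstantThenStrictReflexive}, and \emph{unimodularly equivalent to a reflexive polytope}, as used in the definition of Gorenstein of index $k$. These coincide because translation is the unimodular equivalence given by $U=\mathrm{Id}$, and, more robustly, both are equivalent to $\delta(\Pb_d^{(\tb)},\lambda)$ being symmetric of degree $d$, as recalled in the Introduction. I would state this equivalence explicitly so that the citation of Theorem \ref{ConstantThenStrictReflexive} is fully justified; with that in place the proof is a direct application of the dilation identity in both directions.
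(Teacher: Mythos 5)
Your proposal is correct and follows the same route as the paper, which proves the corollary in one line by observing that $k\Pb_d^{(\sb)}=\Pb_d^{(\tb)}$ and invoking Theorem \ref{ConstantThenStrictReflexive}. Your additional care in checking that $\tb$ inherits the constant-then-strictly-increasing shape and in reconciling ``reflexive up to translation'' with ``unimodularly equivalent to a reflexive polytope'' is a reasonable elaboration of details the paper leaves implicit.
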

\begin{proof}
This is immediate with the observation that $k\Pb_d^{(\sb)}=\Pb_d^{(\tb)}$ and applying the conditions given in Theorem \ref{ConstantThenStrictReflexive}.
\end{proof}

\subsection{$\sb$-sequences increasing by at most 1}
We now consider an additional subclass of $\sb$-sequences. Suppose the $\sb=(s_1,s_2,\ldots,s_d)$ is a sequence of positive integers such that $s_i\leq s_{i+1}$ and $0\leq s_{i+1}-s_i\leq 1$ for all $1\leq i\leq d-1$. We have the following characterizations for when $\Pb_d^{(\sb)}$ is Fano and reflexive. 

\begin{Theorem}\label{atMost1Fano}
Suppose that $\sb=(s_1,s_2,\ldots,s_d)$ is a sequence of positive integers such that $s_i\leq s_{i+1}$ and $0\leq s_{i+1}-s_i\leq 1$ for all $1\leq i\leq d-1$. 
Then $\Pb_d^{(\sb)}$ is translation equivalent to a Fano polytope if and only if $s_d=d+1$. Moreover, the unique interior point is $(d,d-1,\ldots,2,1)^T$. 
\end{Theorem}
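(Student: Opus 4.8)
The plan is to run the same engine as in Theorems~\ref{strictincreaseFano} and~\ref{ConstantThenStrictFano}. Since $\delta_d(\Pb_d^{(\sb)})$ counts interior lattice points, $\Pb_d^{(\sb)}$ is translation equivalent to a Fano polytope precisely when $\delta_d(\Pb_d^{(\sb)})=1$, and by Lemma~\ref{deltapoly} this equals the number of $\eb\in\Ib_d^{(\sb)}$ with $\asc\,\eb=d$. Such an $\eb$ is exactly one satisfying $0<\frac{e_1}{s_1}<\frac{e_2}{s_2}<\cdots<\frac{e_d}{s_d}$, i.e.\ an interior lattice point read off as the column vector $(e_d,\dots,e_1)^T$; so the whole theorem reduces to counting these strictly increasing chains with $0\le e_i<s_i$.

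The crux is a lower bound forced by monotonicity: \emph{any} such chain satisfies $e_i\ge i$ for all $i$. I would prove this by induction. Indeed $e_1\ge 1$, and if $e_i\ge i$, then $\frac{e_{i+1}}{s_{i+1}}>\frac{e_i}{s_i}$ together with $s_{i+1}\ge s_i$ gives $e_{i+1}>\frac{e_i s_{i+1}}{s_i}\ge e_i$, so $e_{i+1}\ge e_i+1\ge i+1$. This single step is where the hypothesis $s_i\le s_{i+1}$ enters, and it is the main (indeed essentially only) obstacle; everything after is bookkeeping.

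Granting the bound, the forward direction and the \emph{moreover} clause fall out together. If $s_d=d+1$, then $d\le e_d\le s_d-1=d$ forces $e_d=d$, and a downward induction (using $e_i\ge i$ and $e_{i+1}\ge e_i+1$) forces $e_i=i$ throughout, so $\eb=(1,2,\dots,d)$ is the only candidate. To see that it actually contributes, I would verify it is a genuine interior chain: $s_d=d+1$ and $s_{j+1}-s_j\le 1$ give $s_i\ge i+1$, so $e_i=i<s_i$, and the ascent $\frac{i}{s_i}<\frac{i+1}{s_{i+1}}$ reduces to $i(s_{i+1}-s_i)<s_i$, which holds since $i(s_{i+1}-s_i)\le i<i+1\le s_i$. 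Hence $\delta_d(\Pb_d^{(\sb)})=1$ with interior point $(d,d-1,\dots,1)^T$.

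For the converse I would split on the value of $s_d$. If $s_d\le d$, no chain exists at all, because $e_d\ge d$ but $e_d\le s_d-1\le d-1$; thus $\delta_d(\Pb_d^{(\sb)})=0$. If $s_d\ge d+2$, I would produce two distinct chains to get $\delta_d(\Pb_d^{(\sb)})\ge 2$: namely $\eb=(1,2,\dots,d)$ and $\eb'=(1,2,\dots,d-1,d+1)$. Both are valid inversion sequences (here $d+1\le s_d-1$ uses $s_d\ge d+2$), and the only nonobvious ascent to check is $\frac{d-1}{s_{d-1}}<\frac{d+1}{s_d}$, which follows from $s_{d-1}\ge s_d-1$ together with $s_d>\frac{d+1}{2}$. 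Since the two sequences differ, $\delta_d(\Pb_d^{(\sb)})\ge 2$ and $\Pb_d^{(\sb)}$ is not Fano, which completes the argument.
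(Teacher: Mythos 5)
Your proof is correct and follows essentially the same route as the paper: reduce to counting inversion sequences with $d$ ascents via Lemma~\ref{deltapoly}, observe that monotonicity forces $e_{i+1}>e_i$ (hence $e_i\ge i$), and conclude that $s_d=d+1$ pins down the unique chain $(1,2,\dots,d)$ while $s_d\ge d+2$ admits two chains and $s_d\le d$ admits none. Your explicit $e_i\ge i$ bound handles the $s_d\le d$ case a bit more directly than the paper's embedding into the constant polytope $\Pb_d^{(d,\dots,d)}$, but the underlying argument is the same.
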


\begin{proof}
Suppose that $s_d=d+1$. We will show that there is a unique $\eb\in\Ib_d^{(\sb)}$ such that $\asc\,\eb=d$. It is clear that the sequence $\eb=(1,2,\ldots,d)$ satisfies this property, as both $\frac{i}{k}<\frac{i+1}{k}$ and $\frac{i}{k}<\frac{i+1}{k+1}$ are true which implies $\frac{i}{s_i}<\frac{i+1}{s_{i+1}}$. Moreover, to have maximum ascents, we must have $e_i<e_{i+1}$, which means that if $e_d\leq d-1$, $e_1=0$ implying that there is no ascent in the first position. Thus, the sequence $\eb=(1,2,\ldots, d)$ is the only inversion sequence with $d$ ascents, giving $\delta_d(\Pb_d^{(\sb)})=1$. It also follows that the unique interior point of $\Pb_d^{(\sb)}$ is $(d,d-1,\ldots,2,1)^T$, as 
	\[
	0<\frac{1}{s_1}<\frac{2}{s_2}<\cdots<\frac{d}{s_d}<1
	\]
implies that the point is in $\Pb_d^{(\sb)}$ and not on any supporting hyperplane. 	

Now, note that if $s_d\geq d+2$, both the inversion sequences $(1,2,3,\ldots,d)$ and $(2,3,\ldots,d+1)$ has $d$ ascents. Thus, $\delta_d(\Pb_d^{(\sb)})\geq 2$ in this case.

If we have that $s_d\leq d$, it follows that $\Pb_d^{(\sb)}\subseteq \Pb_d^{(\tb)}$ where $\tb=(d,d,\ldots,d)$. Since it is clear that for $\eb\in\Ib_d^{(\tb)}$  we have $i\in \Asc\,\eb$ if and only if $e_{i-1}<e_i$ and since $e_i\in\{0,1,...,d-1\}$ there is no sequence with $\asc\,\eb=d$. Thus, we have $\delta_d(\Pb_d^{(\sb)})=\delta_d(\Pb_d^{(\tb)})=0$.
\end{proof}

\begin{Theorem}
Suppose that $\sb=(s_1,s_2,\ldots,s_d)$ is a sequence of positive integers such that $s_i\leq s_{i+1}$ and $0\leq s_{i+1}-s_i\leq 1$ for all $1\leq i\leq d-1$.	
and suppose that $\Pb_d^{(\sb)}$ is Fano. Then $\Pb_d^{(\sb)}$ is reflexive if and only if $k_i|s_i$ and $k_i|s_{i+1}$ where $k_i=(i+1)s_i-is_{i+1}$. 
\end{Theorem}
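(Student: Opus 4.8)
The plan is to repeat the translate-and-read-off-the-facets strategy used for Theorems \ref{strictincreaseReflexive} and \ref{ConstantThenStrictReflexive}, the only new ingredient being a different location of the interior point. By Theorem \ref{atMost1Fano} the hypothesis that $\Pb_d^{(\sb)}$ is Fano forces $s_d=d+1$, and the unique interior lattice point is $(d,d-1,\ldots,2,1)^T$, i.e. the point with $x_i=i$ for all $i$. First I would translate $\Pb_d^{(\sb)}$ by subtracting this point, so that the origin becomes the interior point, and record the vertices as the columns of the resulting matrix.

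Next I would compute the $\Hc$-representation of the translated simplex directly from the defining inequalities $0\leq x_1/s_1$, $s_{i+1}x_i-s_ix_{i+1}\leq 0$, and $x_d/s_d\leq 1$, via the substitution $x_i\mapsto x_i+i$. The two outer inequalities become $-x_1\leq 1$ and $x_d\leq 1$, while the consecutive-ratio inequalities become $s_{i+1}x_i-s_ix_{i+1}\leq (i+1)s_i-is_{i+1}=k_i$ for $1\leq i\leq d-1$; this is exactly where the stated quantity $k_i$ appears. Since $\Pb_d^{(\sb)}$ is a $d$-simplex with $d+1$ vertices and I have produced $d+1$ inequalities, I would check that this list is genuinely the facet description by verifying that each translated vertex satisfies exactly $d$ of the inequalities with equality and the remaining one strictly. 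This incidence check is the most computational step, but it is routine and parallels the verifications in the two earlier reflexivity theorems.

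Finally I would invoke the reflexivity criterion: a Fano polytope with the origin in its interior is reflexive if and only if its dual is a lattice polytope, equivalently if and only if every facet inequality can be rescaled to the form $\langle a,x\rangle\leq 1$ with $a\in\ZZ^d$ (the vertices of the dual being precisely these rescaled inner normals). The inequalities $-x_1\leq 1$ and $x_d\leq 1$ are already of this form and contribute lattice points regardless of $\sb$. Dividing $s_{i+1}x_i-s_ix_{i+1}\leq k_i$ by $k_i$ yields integral coefficients if and only if $k_i\mid s_i$ and $k_i\mid s_{i+1}$, which is the asserted condition.

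The one point requiring care --- the main (small) obstacle --- is that this rescaling is legitimate only once $k_i>0$, so that the inequality really is oriented as an upper bound with the origin on the correct side. This I would settle at the outset: the Fano condition $s_d=d+1$ together with $s_{i+1}-s_i\leq 1$ gives $s_i\geq s_d-(d-i)=i+1$, whence $k_i=(i+1)s_i-is_{i+1}\geq (i+1)s_i-i(s_i+1)=s_i-i\geq 1>0$. As a sanity check, at a constant step $s_{i+1}=s_i$ one gets $k_i=s_i$, so the divisibility conditions hold automatically there, while at a genuine increase $s_{i+1}=s_i+1$ the conditions $k_i\mid s_i$ and $k_i\mid s_{i+1}$ force $k_i\mid 1$, i.e. $s_i=i+1$, consistent with the Fano description.
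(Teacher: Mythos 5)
Your proposal is correct and follows essentially the same route as the paper: translate by the unique interior point $(d,d-1,\ldots,2,1)^T$ from Theorem \ref{atMost1Fano}, verify the $\Hc$-representation $-x_1\leq 1$, $x_d\leq 1$, $s_{i+1}x_i-s_ix_{i+1}\leq k_i$ by the vertex--facet incidence count, and read off that the dual is a lattice polytope exactly when $k_i\mid s_i$ and $k_i\mid s_{i+1}$. Your explicit check that $k_i>0$ (via $s_i\geq i+1$) is a small point the paper leaves implicit, and it is a worthwhile addition.
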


\begin{proof}
By Theorem \ref{atMost1Fano}, the interior point of $\Pb_d^{(\sb)}$ is $(d,d-1,\ldots,2,1)^T$. So, if we translate the polytope such that the origin is the interior point, we have the polytope with vertices
	\[
	\begin{bmatrix}
	-d & 1 & 1 & 1 & \cdots & 1\\ 
	1-d & 1-d & s_{d-1}-d+1 &s_{d-1}-d+1 & \cdots & s_{d-1}-d+1\\
	2-d & 2-d & 2-d & s_{d-2}-d+2 & \cdots &  s_{d-2}-d+2 \\
	\vdots & \vdots & \vdots & & \ddots & \cdots\\
	-1 & -1 & -1 & \cdots & -1 & s_1-1\\
	\end{bmatrix}
	\]
which, using the convention of $x_d$ given by the first row and so on with $x_1$ given by the last row, has the $\mathcal{H}$-representation 
	\begin{itemize}
	\item $x_d\leq 1$
	\item $s_{i+1}x_i-s_ix_{i+1}\leq (i+1)s_i-is_{i+1}$ for all $1\leq i\leq d-1$
	\item $-x_1\leq 1$
	\end{itemize}		
as it is not hard to see that each vertex satisfies $d$ equations with equality and 1 equation with strict inequality. It is now clear that $(\Pb_d^{(\sb)})^\vee$ is a lattice polytope if and only if $k_i|s_i$ and $k_i|s_{i+1}$ for $k_i=(i+1)s_i-is_{i+1}$.  
\end{proof}

\section{Integral decomposition property and triangulations}

We say $\Pc$ satisfies the \emph{integral decomposition property (IDP)} if for all $\zb\in k\Pc\cap\ZZ^d$ there exists $\xb_1,\xb_2,\ldots,\xb_k\in \Pc\cap \ZZ^d$ such that 
	\[
	\xb_1+\xb_2+\cdots+\xb_k=\zb.
	\]		 
If $\Pc$ satisfies then integer decomposition property, we say that $\Pc$ is IDP. For $\sb$-lecture hall polytopes where $\sb$ is monotonic sequence, we have the following theorem.

\begin{Theorem}\label{normality}
Let $\sb=(s_1,s_2,\ldots,s_d)$ be a monotone sequence of positive integers. Then the polytope $\Pb_d^{(\sb)}$ is IDP.  
\end{Theorem}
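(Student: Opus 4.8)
The plan is to reduce to the non-decreasing case and then to write down an \emph{explicit} layered decomposition, rather than peeling off one lattice point at a time. First I would use the unimodular equivalence $\Pb_d^{(s_1,\ldots,s_d)}\iso\Pb_d^{(s_d,\ldots,s_1)}$ recorded in the introduction to assume $\sb$ is non-decreasing, $s_1\le s_2\le\cdots\le s_d$. Unwinding the definition, a lattice point $\zb\in k\Pb_d^{(\sb)}\cap\ZZ^d$ is precisely an integer vector satisfying $0\le z_1/s_1\le z_2/s_2\le\cdots\le z_d/s_d\le k$; in particular $0\le z_i\le k s_i$ for every $i$, and the ratios $z_i/s_i$ form a non-decreasing sequence. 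Conceptually, $\Pb_d^{(\sb)}$ is the image of the (unimodular) order simplex $\{0\le\lambda_1\le\cdots\le\lambda_d\le 1\}$ under the diagonal scaling $\lambda\mapsto(s_1\lambda_1,\ldots,s_d\lambda_d)$, so the task is to split an increasing rational chain valued in $[0,k]$ into $k$ increasing chains valued in $[0,1]$ that remain integral after scaling.

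Concretely, for each layer $j=1,\ldots,k$ I would define $\xb^{(j)}\in\ZZ^d$ by $x_i^{(j)}=\min\{s_i,\max\{0,z_i-(j-1)s_i\}\}$, and then check three things. For membership, each $x_i^{(j)}$ is an integer with $0\le x_i^{(j)}\le s_i$, and in scaled form $x_i^{(j)}/s_i=\min\{1,\max\{0,z_i/s_i-(j-1)\}\}$ is obtained from the non-decreasing quantity $z_i/s_i$ by subtracting the \emph{fixed} integer $j-1$ and truncating into $[0,1]$; since truncation by a fixed monotone map preserves order, the scaled sequence $x_1^{(j)}/s_1\le\cdots\le x_d^{(j)}/s_d$ is non-decreasing and lies in $[0,1]$, whence $\xb^{(j)}\in\Pb_d^{(\sb)}\cap\ZZ^d$. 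For reconstruction, the bound $0\le z_i\le k s_i$ makes the coordinatewise telescoping identity $\sum_{j=1}^{k}\min\{s_i,\max\{0,z_i-(j-1)s_i\}\}=z_i$ hold, so $\xb^{(1)}+\cdots+\xb^{(k)}=\zb$, which is exactly the IDP decomposition.

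The hard part is not the bookkeeping but keeping integrality and the chain inequalities \emph{simultaneously}. A naive greedy that peels off a single lattice point and recurses on $(k-1)\Pb_d^{(\sb)}$ runs into the difficulty that the interval of admissible values for the next coordinate can have length less than $1$ and hence contain no integer, so no such greedy choice need exist. The device that resolves this is to perform the truncation in the scaled coordinates $z_i/s_i$ with a shift $j-1$ that is uniform across all coordinates $i$: this makes order-preservation automatic, so every layer satisfies the chain condition, while integrality survives because the truncation only ever outputs an already-integral value, $0$, or $s_i$. I expect the three verifications to be routine once this formula is in hand; the real content of the argument is recognizing that the uniform-shift truncation, rather than a coordinatewise greedy, is the correct construction.
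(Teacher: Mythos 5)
Your proof is correct and is essentially the paper's argument written in closed form: the paper peels off the single layer $y_i=\max\{0,x_i-(k-1)s_i\}$ (set to $0$ below the first index with $x_i>(k-1)s_i$) and inducts on $k$, and unrolling that induction reproduces exactly your layers $x_i^{(j)}=\min\{s_i,\max\{0,z_i-(j-1)s_i\}\}$, so the two decompositions coincide and your verifications (integrality, order-preservation under the uniform-shift truncation, telescoping) are the same checks the paper performs. One point to scrutinize before finalizing: as written your argument never invokes the monotonicity of $\sb$ --- the chain $z_1/s_1\le\cdots\le z_d/s_d$ that drives the order-preservation step comes from membership in $k\Pb_d^{(\sb)}$ itself, not from any hypothesis on $\sb$ --- yet the general statement is posed in the paper only as Conjecture \ref{IDPconj}, so you should either pinpoint where monotonicity genuinely enters your argument or recognize that you are claiming (and must therefore defend with extra care) the strictly stronger statement.
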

\begin{proof}
Without loss of generality, suppose that $\sb$ is increasing. We will show that given $k\geq 2$, for any $\xb\in k\Pb_d^{(\sb)}\cap \ZZ^d$, there exists some $\yb\in \Pb_d^{(\sb)}\cap \ZZ^d$ such that $(\xb-\yb)\in (k-1)\Pb_d^{(\sb)}\cap \ZZ^d$. Note that this is sufficient, because this result allows integral closure to follow from induction on $k$.

 First note that $k\Pb_d^{(\sb)}=\Pb_d^{(ks_1,ks_2,\ldots,ks_d)}$, which is clear by definition. Let $\xb=(x_d,x_{d-1},\ldots,x_1)^T\in k\Pb_d^{(\sb)}\cap \ZZ^d$, so we have that $\xb$ satisfies 
 	\[
 	0\leq \frac{x_1}{ks_1}\leq\frac{x_2}{ks_2}\leq \cdots\leq \frac{x_d}{ks_d}\leq 1.
 	\] 
Note that since $\sb$ is increasing, given any $C\in\ZZ_{>0}$ by the above we must have that $x_i\leq Cs_i$ implies that $x_{i-1}\leq Cs_{i-1}$ and likewise $x_i>Cs_i$ implies $x_{i+1}>Cs_{i+1}$. So, let $1\leq j\leq d$ be the minimum index such that $x_j>(k-1)s_j$. Then we let 
	\[
	\yb=\left(x_d-(k-1)s_d, \ldots, x_j-(k-1)s_j,0,\ldots,0\right)^T
	\]
with $\yb={\bf 0}$ if there is no such $j$.
	
We know that the lattice point  is in $\Pb_d^{(\sb)}$ because for any $j\leq i<d$ we have 
	\[
 \frac{x_i-(k-1)s_i}{s_i}\leq \frac{x_{i+1}-(k-1)s_{i+1}}{s_{i+1}}
	\]
	 is equivalent to $\displaystyle \frac{x_i}{ks_i}\leq\frac{x_{i+1}}{ks_{i+1}}$ and $0<x_i-(k-1)s_i\leq s_i$ by construction.
	 
It is left to verify that $(\xb-\yb)=((k-1)s_d,\ldots,(k-1)s_j,x_{j-1},\ldots,x_1)^T \in \Pb_d^{((k-1)s_1,\ldots,(k-1)s_d)}\cap \ZZ^d$. However, this is immediate, because $\displaystyle \frac{x_i}{(k-1)s_i}\leq \frac{x_{i+1}}{(k-1)s_{i+1}}$ is equivalent to $\displaystyle \frac{x_i}{ks_i}\leq \frac{x_{i+1}}{ks_{i+1}}$ and it is clear that since $x_{j-1}\leq (k-1)s_{j-1}$ by assumption that 
	\[
	\frac{x_{j-1}}{(k-1)s_{j-1}}\leq\frac{(k-1)s_j}{(k-1)s_j}=1.
	\]	 
Thus, we have the  $\Pb_d^{(\sb)}$ is IDP.
\end{proof}

Recall that a \emph{triangulation} of a lattice polytope $\Pc$ is a subdivison of $\Pc$ into $d$-dimensional simplices. We say that a triangulation is \emph{unimodular} if each simplex $\Delta$  of the triangulation is unimodularly equivalent to the standard $d$-simplex or equivalently, each simplex has smallest possible normalized volume $\Vol(\Delta)=1$. One should note that a polytope $\Pc$ possessing a unimodular triangulation means that $\Pc$ can be covered by IDP polytopes which implies that $\Pc$ is IDP. We will show the existence for a unimodular triangulation of $\Pb_d^{(\sb)}$ provided that for all $1\leq i\leq d-1$, $s_{i+1}=n_is_i$ where $n_i\in\ZZ_{>0}$.

First, we define chimney polytopes. Given a polytope $\Pc\subset \RR^d$ and two integral linear functionals $\ell$ and $u$ such that $\ell\leq u$, then the \emph{chimney polytope} associated to $\Pc$, $\ell$, and $u$ is 
	\[
	\Chim(\Pc,\ell,u):=\{(\xb,y)\in\RR^d\times\RR \, : \, \xb\in\Pc, \, \ell(\xb)\leq y\leq u(\xb)\}.
	\]

For chimney polytopes we have the following theorem regarding triangulations.

\begin{Lemma}[{\cite[Theorem 2.8]{UnimodularTriangulations}}]\label{chimneytriangulation}
If $\Pc$ admits a unimodular triangulation, then so does $\Chim(\Pc,\ell,u)$.
\end{Lemma}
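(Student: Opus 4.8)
The plan is to build the desired triangulation from the given unimodular triangulation $\mathcal{T}$ of $\Pc$, first normalizing the two bounding functionals by a shear and then treating one simplex of $\mathcal{T}$ at a time. The first step is a \emph{shearing reduction}: since $\ell$ is an integral linear functional, the map $(\xb,y)\mapsto(\xb,\,y-\ell(\xb))$ is a unimodular transformation of $\RR^{d}\times\RR$ preserving $\ZZ^{d+1}$, and it carries $\Chim(\Pc,\ell,u)$ onto $\Chim(\Pc,0,h)$ where $h:=u-\ell\geq 0$. Because unimodular transformations send unimodular triangulations to unimodular triangulations, I may assume $\ell=0$, so that the chimney is the region $\{(\xb,z):\xb\in\Pc,\ 0\leq z\leq h(\xb)\}$ lying under the graph of a nonnegative integral linear functional $h$.

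Next I would localize the problem to the simplices of $\mathcal{T}$. For each maximal simplex $\Delta\in\mathcal{T}$ the set $\Chim(\Delta,0,h)$ is a ``slanted prism'' $\con\{(v,0),(v,h(v)) : v \text{ a vertex of }\Delta\}$; these slanted prisms cover $\Chim(\Pc,0,h)$, and two of them meet exactly in the lower-dimensional chimney over the common face of the corresponding simplices. Thus it suffices to triangulate each slanted prism unimodularly in a way that agrees on shared faces. Since $\mathcal{T}$ is unimodular its vertices are exactly $\Pc\cap\ZZ^{d}$, and the lattice points of the chimney are precisely the ``tower'' points $(v,j)$ with $v\in\Pc\cap\ZZ^{d}$ and $0\leq j\leq h(v)$. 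I would fix a total order on these tower points, primarily by the height $j$ and secondarily by a fixed order on $\Pc\cap\ZZ^{d}$, and take the pulling triangulation induced by this order on each slanted prism. Because a pulling triangulation restricts on any face to the pulling triangulation of that face for the induced order, the triangulations of two adjacent slanted prisms automatically coincide on the shared facet-chimney, which settles the gluing.

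The heart of the argument, and the step I expect to be the main obstacle, is to verify that this height-ordered triangulation of a single slanted simplicial prism $\Chim(\Delta,0,h)$ is genuinely unimodular. Here I would identify the induced triangulation with the staircase (monotone lattice-path) triangulation of a prism over a unimodular simplex: its maximal cells are indexed by monotone integer lattice paths climbing through the $d+1$ towers, each $(d+1)$-dimensional cell being spanned by $d+2$ tower points along such a path, with consecutive vertices either advancing to the next tower or increasing in height by a unit step. One must check that each such cell has $\Vol=1$ by exhibiting its $d+2$ vertices as an affine unimodular basis, exactly as in the classical staircase triangulation of $\Delta\times[0,1]$, the only new feature being the varying integer tower heights $h(v)$. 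Establishing this volume computation, together with the verification that the unit-step lattice paths actually tile the slanted prism, is where the real work lies; everything else then follows formally from shear invariance and the face-compatibility of pulling triangulations.
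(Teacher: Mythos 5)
First, note that the paper does not prove this statement at all: it is quoted as \cite[Theorem 2.8]{UnimodularTriangulations} and used as a black box, so there is no internal proof to compare against; your proposal has to be judged on its own.

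Your scaffolding is the standard one and is essentially correct: the shear $(\xb,y)\mapsto(\xb,y-\ell(\xb))$ reducing to $\ell=0$, the covering of $\Chim(\Pc,0,h)$ by the slanted prisms $\Chim(\Delta,0,h)$ over the cells $\Delta$ of the unimodular triangulation $\mathcal{T}$, the observation that the lattice points are exactly the tower points (which does use unimodularity of $\mathcal{T}$), and the gluing of the prism triangulations via the face-compatibility of pulling refinements. The gap is that the one step you yourself identify as ``where the real work lies'' --- that the height-ordered pulling triangulation of a single slanted prism over a unimodular simplex is unimodular --- is not carried out, and the combinatorial model you propose for carrying it out is wrong. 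Take $\Delta=\con\{v_0,v_1,v_2\}$ with $v_0=(0,0)$, $v_1=(1,0)$, $v_2=(0,1)$ and $h(x_1,x_2)=2x_1$, so the tower heights are $(h_0,h_1,h_2)=(0,2,0)$. The slanted prism is the tetrahedron $Q=\con\{(0,0,0),(1,0,0),(1,0,2),(0,1,0)\}$ of normalized volume $2$, whose only non-vertex lattice point is $(1,0,1)$; the unique triangulation using all lattice points consists of $\con\{(0,0,0),(0,1,0),(1,0,0),(1,0,1)\}$ and $\con\{(0,0,0),(0,1,0),(1,0,1),(1,0,2)\}$. The second cell has vertices at heights $0,0,1,2$ in towers $v_0,v_2,v_1,v_1$ and necessarily contains a ``diagonal'' step from a height-$0$ point to $(v_1,1)$ that changes tower and height simultaneously, skipping $(v_1,0)$. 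So the cells are not ``monotone lattice paths with consecutive vertices either advancing to the next tower or increasing in height by a unit step,'' and the problem does not reduce to ``the classical staircase triangulation of $\Delta\times[0,1]$ with varying tower heights'': the varying heights change the combinatorics of the cells, not just their sizes. (A simple count shows the same thing: a path of $d+2$ points with $d$ tower-advances has room for only one unit up-step, whereas the prism has normalized volume $h_0+\cdots+h_d$ and needs that many cells.)

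Concretely, what is missing is a proof that every maximal cell produced by pulling the tower points in order of increasing height on $\Chim(\Delta,0,h)$, $\Delta$ unimodular, has $\Vol=1$ --- for instance by induction on $h_0+\cdots+h_d$, splitting off the top lattice point of the tallest tower, or by exhibiting the prism as a Lawrence prism over a unimodular simplex and triangulating it directly. Until that lemma is established (with a correct description of the cells), the argument is an outline of the right strategy rather than a proof.
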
	

With this in mind, we can now state and prove a theorem for $\Pb_d^{(\sb)}$ where $\sb$ is increasing of a particular form.

\begin{Theorem}\label{triangulation}
Let $\sb$ be an increasing sequence of positive integers such that $s_{i+1}=k_is_i$ for some $k_i\in\ZZ_{>0}$ for all $1\leq i\leq d-1$. Then $\Pb_d^{(\sb)}$ admits a unimodular triangulation.  
\end{Theorem}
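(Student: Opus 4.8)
The plan is to prove this by induction on $d$, repeatedly invoking the chimney triangulation result of Lemma \ref{chimneytriangulation}. The guiding observation is that a lecture hall polytope is literally a chimney over the lecture hall polytope in one fewer dimension, provided one peels off the \emph{largest}-index coordinate. The divisibility hypothesis $s_{i+1}=k_is_i$ is exactly what is needed to make the relevant bounding functional integral.

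Concretely, write $\sb'=(s_1,\ldots,s_{d-1})$ and set $\Pc=\Pb_{d-1}^{(\sb')}\subset\RR^{d-1}$, viewed in the coordinates $(x_1,\ldots,x_{d-1})$. I would introduce the two integral linear functionals $\ell(\xb)=k_{d-1}x_{d-1}$ and $u(\xb)=s_d$ (a constant). First I would check that the projection forgetting $x_d$ maps $\Pb_d^{(\sb)}$ onto exactly $\Pc$: the chain $0\le x_1/s_1\le\cdots\le x_{d-1}/s_{d-1}\le x_d/s_d\le 1$ forces $x_{d-1}/s_{d-1}\le 1$, and conversely any point of $\Pc$ extends. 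The condition imposed on the fiber coordinate $x_d$ is then $x_{d-1}/s_{d-1}\le x_d/s_d\le 1$, which rearranges precisely to $\ell(\xb)=k_{d-1}x_{d-1}\le x_d\le s_d=u(\xb)$ using $s_d=k_{d-1}s_{d-1}$. Hence $\Pb_d^{(\sb)}=\Chim(\Pc,\ell,u)$. I would also record that $\ell\le u$ on $\Pc$, since $x_{d-1}\le s_{d-1}$ there gives $k_{d-1}x_{d-1}\le k_{d-1}s_{d-1}=s_d$.

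With this identification in hand, the induction closes immediately: the truncated sequence $\sb'$ again satisfies $s_{i+1}=k_is_i$ for $1\le i\le d-2$, so by the inductive hypothesis $\Pc=\Pb_{d-1}^{(\sb')}$ admits a unimodular triangulation, and Lemma \ref{chimneytriangulation} then yields one for $\Chim(\Pc,\ell,u)=\Pb_d^{(\sb)}$. For the base case $d=1$ one has $\Pb_1^{(s_1)}=[0,s_1]$, whose subdivision into the unit segments $[0,1],[1,2],\ldots,[s_1-1,s_1]$ is unimodular.

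The only genuinely delicate point---and the reason the hypothesis cannot be dropped---is the choice of \emph{which} coordinate to peel off. Peeling off the smallest index $x_1$ instead would force the upper functional $u(\xb)=(s_1/s_2)x_2$, whose coefficients $1/k_1$ are not integers in general, so the chimney lemma would not apply. Building up from the bottom, i.e.\ adjoining the coordinate $x_d$ last, is exactly what converts the divisibility $s_{d-1}\mid s_d$ into integrality of $\ell=k_{d-1}x_{d-1}$; this is the crux that makes the induction go through.
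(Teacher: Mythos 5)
Your proof is correct and follows essentially the same route as the paper: both identify $\Pb_d^{(\sb)}$ as $\Chim(\Pb_{d-1}^{(s_1,\ldots,s_{d-1})},k_{d-1}x_{d-1},s_d)$ and induct on $d$ using Lemma \ref{chimneytriangulation}, with the $1$-dimensional case as the base. Your verification of the chimney identification and the remark on why one must peel off the top coordinate are slightly more detailed than the paper's, but the argument is the same.
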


\begin{proof}
Note that if $\sb$ has the property $s_d=k_{d-1}s_{d-1}$ for some $k_{d-1}\in\ZZ_{>0}$, we can express $\Pb_d^{(\sb)}$ as a chimney polytope, namely 
	\[
	\Pb_d^{(\sb)}\iso\Chim(\Pb_{d-1}^{(s_1,\ldots,s_{d-1})},k_{d-1}x_{d-1}, s_d)
	\]
where ${\bf s_d}$ is constant function of value $s_d$. It is easy to see this isomorphism as all of the supporting hyperplanes for 	$\Chim(\Pb_{d-1}^{(s_1,\ldots,s_{d-1})},k_{d-1}x_{d-1}, s_d)$ are those of $\Pb_{d-1}^{(s_1,\ldots,s_{d-1})}$ with the addition of $x_d\leq s_d$ and $k_{d-1}x_{d-1}\leq x_d$. However, these hyperplanes are precisely the supporting hyperplanes of $\Pb_d^{(\sb)}$.

Now, note that any 1 dimensional lecture hall polytope trivially has a unimodular triangulation. So, if $\sb$ has the property that $s_{i+1}=k_is_i$ for a positive integer $k_i$ for each $i$, then applying Theorem \ref{chimneytriangulation} to this inductive chimney polytope construction of $\Pb_d^{(\sb)}$ yields the existence of a unimodular triangulation.  
\end{proof}

\begin{Remark}
We should note that Theorem \ref{triangulation} implies that $\Pb_d^{(\sb)}$ where $\sb$ has the property $s_{i+1}=\frac{s_i}{k_i}$ for some positive integer $k_i$ for all $i$ also admits a unimodular triangulation. 
\end{Remark}

\section{Constructing new examples}
In this section, we construct new Gorenstein and IDP  lecture hall polytopes. We will do this by identifying an $\sb$-lecture hall polytope as the free sum of two smaller lecture hall polytopes which are Gorenstein and/or IDP. 

Recall that given two lattice polytopes $\Pc\subset \RR^{d_{\Pc}}$ and $\Qc\subset \RR^{d_{\Qc}}$ such that $0_{d_{\Pc}}\in \Pc$ and $0_{d_{\Qc}}\in \Qc$, the \emph{free sum} of $\Pc$ and $\Qc$ is the $(d_{\Pc}+d_{\Qc})$-dimensional polytope given by $\Pc\oplus\Qc=\con\{(0_{\Pc}\times \Qc)\cup(\Pc\times 0_{\Qc})\}$. We can view lecture hall polytopes as free sum of smaller lecture hall polytopes.

\begin{Proposition}\label{FreeSumDecomp}
For integer sequences $\sb=(s_1,\ldots,s_d)$ and $\tb=(t_1,\ldots,t_e)$, we have $\Pb_{d+e}^{(\sb,\tb)}\iso \Pb_d^{(\sb)}\oplus \Pb_e^{\left(\tilde{\tb}\right)}$, where $(\sb,\tb)=(s_1,\ldots,s_d,t_1,\ldots,t_e)$ and $\tilde{\tb}=(t_d,t_{d-1},\ldots,t_1)$. 
\end{Proposition}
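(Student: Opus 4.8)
The plan is to use that both $\Pb_{d+e}^{(\sb,\tb)}$ and the free sum $\Pb_d^{(\sb)}\oplus\Pb_e^{(\tilde\tb)}$ are lattice \emph{simplices}, so that the equivalence can be produced by a single affine map $f(\mathbf v)=\mathbf v\,U+\mathbf w$ that bijects the two vertex sets, with $U$ an integer unimodular matrix and $\mathbf w\in\ZZ^{d+e}$. First I would record the vertices from the vertex matrix: $\Pb_d^{(\sb)}$ has vertices $w_0=\mathbf 0,\dots,w_d$ where $w_k$ has $i$th coordinate $s_i$ for $i>d-k$ and $0$ otherwise, and likewise $\Pb_e^{(\tilde\tb)}$ has vertices $w'_0=\mathbf 0,\dots,w'_e$ (with $\tilde\tb$ the reverse of $\tb$). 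By the definition of free sum, $\Pb_d^{(\sb)}\oplus\Pb_e^{(\tilde\tb)}$ then has the $d+e+1$ points $(w_k,\mathbf 0)$ and $(\mathbf 0,w'_l)$ as candidate vertices, the origin being shared; since the nonzero $w_k$ and the nonzero $w'_l$ are linearly independent within their respective coordinate blocks, these points are affinely independent, so the free sum is genuinely a $(d+e)$-simplex. The target $\Pb_{d+e}^{(\sb,\tb)}$ is a simplex with explicit vertices $W_0=\mathbf 0,\dots,W_{d+e}$, where $W_m$ has $i$th coordinate equal to the $i$th entry of $(\sb,\tb)$ for $i>d+e-m$ and $0$ otherwise.

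The crux is the vertex correspondence. Gluing both summands at the origin is the wrong guess: in the target the junction between the $\sb$-block and the $\tb$-block is the full-$\tb$ vertex $W_e=(\mathbf 0_d;\tb)$, not the origin. So I would match $(w_k,\mathbf 0)\mapsto W_{e+k}$ and $(\mathbf 0,w'_l)\mapsto W_{e-l}$; both rules send the shared origin to $W_e$, so the correspondence is consistent. With this fixed I would read off $f$ by setting $\mathbf w=W_e$ and comparing consecutive edge vectors emanating from the origin. In the first $d$ coordinates the edge differences agree exactly, forcing $U$ to be the identity there; in the remaining $e$ coordinates the edge differences force $U$ to send the $j$th coordinate of the second block to $-1$ times the $(e+1-j)$th target coordinate. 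Thus $U=\left(\begin{smallmatrix}I_d & 0\\ 0 & -J_e\end{smallmatrix}\right)$ with $J_e$ the reversal permutation matrix; this is integral with $\det U=\pm1$, hence unimodular, and $\mathbf w=(\mathbf 0_d;\tb)$ is integral.

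Finally I would check that $f$ really sends each free-sum vertex to its designated target vertex, not merely the consecutive differences. Applying $f$ to $(w_k,\mathbf 0)$ gives $(w_k;\tb)=W_{e+k}$ immediately. Applying $f$ to $(\mathbf 0,w'_l)$: the sign-flipped reversal turns the filled-in tail of $w'_l$ into $-t_m$ in the first $l$ positions of the $\tb$-block, and adding the shift $\tb$ cancels exactly those entries while leaving $t_m$ in positions $m>l$, which is precisely $W_{e-l}$. Since both polytopes are simplices and $f$ is an affine unimodular bijection of vertex sets, it carries one onto the other, giving the claimed equivalence.

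The step I expect to be the main obstacle is identifying the correct correspondence together with the block $-J_e$: the reversal encoded in $\tilde\tb$ and the need to attach the $\sb$-part at $W_e$ rather than the origin are exactly what force the sign-flip-and-reverse block to appear, and this is precisely what keeps $U$ integral and unimodular. Once the correspondence is right, the vertex-by-vertex verification is routine bookkeeping.
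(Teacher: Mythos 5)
Your proof is correct and takes essentially the same route as the paper, whose entire proof is the single line ``Translate by the vector $(t_e,\ldots,t_2,t_1,0,\ldots,0)^T$'' --- i.e.\ exactly the shift to the junction vertex $({\bf 0}_d;\tb)$ that you identify. Your write-up is in fact more complete than the paper's: translation alone only produces a reflected copy of the free sum, and the unimodular factor $\left(\begin{smallmatrix}I_d&0\\0&-J_e\end{smallmatrix}\right)$ (sign-flip and reversal on the $\tb$-block) that you make explicit is genuinely needed but left unstated in the paper.
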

\begin{proof}
Translate by the vector $(t_e,\ldots,t_2,t_1,0,0,\ldots,0)^T$. 
\end{proof}

The following generalization of Braun's formula gives us conditions on the $\delta$-polynomial of a free sum of two polytopes. 

\begin{Lemma}[{\cite[Theorem 1.4]{BeckJayawantMcAllister}}]\label{FreeSumDelta}
Let $\Pc\subset \RR^d$ and $\Qc\subset\RR^e$ be integral convex polytopes each containing its respective origin. Then $\delta(\Pc\oplus\Qc,\lambda)=\delta(\Pc,\lambda)\delta(\Qc,\lambda)$ holds if and only if either $\Pc$ or $\Qc$ satisfies that the equation of each facet is of the form $\sum_{i=1}^fa_ix_i=b$ where $a_i$ is an integer, $b\in\{0,1\}$, and $f\in\{d,e\}$.  
\end{Lemma}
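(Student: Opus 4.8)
The plan is to convert the $\delta$-polynomial identity into an equivalent statement about Ehrhart series, and then into a lattice-point count governed by a gauge function. Since $\dim(\Pc\oplus\Qc)=d+e$ while $\dim\Pc=d$ and $\dim\Qc=e$, comparing $\Ehr_{\Pc}(\lambda)=\delta(\Pc,\lambda)/(1-\lambda)^{d+1}$ for each of the three polytopes shows that $\delta(\Pc\oplus\Qc,\lambda)=\delta(\Pc,\lambda)\delta(\Qc,\lambda)$ holds if and only if $\Ehr_{\Pc\oplus\Qc}(\lambda)=(1-\lambda)\,\Ehr_{\Pc}(\lambda)\,\Ehr_{\Qc}(\lambda)$. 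I would therefore prove that this single Ehrhart identity holds exactly under the stated facet hypothesis.

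The key tool is the gauge function $\rho_{\Pc}(x)=\min\{a\ge 0 : x\in a\Pc\}$, which is positively homogeneous, $\rho_{\Pc}(kx)=k\,\rho_{\Pc}(x)$ for $k>0$, and satisfies $x\in a\Pc\iff a\ge\rho_{\Pc}(x)$. Writing a point of $\Pc\oplus\Qc$ as $(\alpha p,\beta q)$ with $p\in\Pc$, $q\in\Qc$, $\alpha,\beta\ge0$, and $\alpha+\beta=1$, one gets $t(\Pc\oplus\Qc)=\bigcup_{a+b=t}(a\Pc)\times(b\Qc)$, so that for integral $(x,y)$ one has $(x,y)\in t(\Pc\oplus\Qc)$ if and only if $\rho_{\Pc}(x)+\rho_{\Qc}(y)\le t$. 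Since $x\in a\Pc$ for an integer $a$ exactly when $a\ge\lceil\rho_{\Pc}(x)\rceil$, one finds $(1-\lambda)\Ehr_{\Pc}(\lambda)=\sum_{x\in\ZZ^d}\lambda^{\lceil\rho_{\Pc}(x)\rceil}$, and similarly for $\Qc$. Hence the coefficient of $\lambda^t$ in $\Ehr_{\Pc\oplus\Qc}(\lambda)$ counts the integral pairs with $\lceil\rho_{\Pc}(x)+\rho_{\Qc}(y)\rceil\le t$, whereas the coefficient in $(1-\lambda)\Ehr_{\Pc}(\lambda)\Ehr_{\Qc}(\lambda)$ counts those with $\lceil\rho_{\Pc}(x)\rceil+\lceil\rho_{\Qc}(y)\rceil\le t$.

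Because $\lceil a+b\rceil\le\lceil a\rceil+\lceil b\rceil$ always, the first series dominates the second coefficientwise, and equality holds for all $t$ if and only if $\lceil\rho_{\Pc}(x)+\rho_{\Qc}(y)\rceil=\lceil\rho_{\Pc}(x)\rceil+\lceil\rho_{\Qc}(y)\rceil$ for every integral pair. A short fractional-part computation shows this fails precisely when both $\rho_{\Pc}(x)$ and $\rho_{\Qc}(y)$ are non-integers with fractional parts summing to at most $1$; call such an $(x,y)$ a bad pair. Now for $x\in\cone(\Pc)$ one has $\rho_{\Pc}(x)=\max_{F}\langle a^{(F)},x\rangle/b_F$ over the facet inequalities $\langle a^{(F)},x\rangle\le b_F$ (with primitive integral $a^{(F)}$ and $b_F\ge 0$, the maximum taken over facets with $b_F>0$); if every $b_F\in\{0,1\}$ this value is an integer for all $x\in\ZZ^d$, so when $\Pc$ or $\Qc$ satisfies the facet condition no bad pair can exist and the product formula holds. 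Conversely, if both polytopes fail the condition, each has a facet with $b_F\ge 2$, and primitivity of $a^{(F)}$ yields a lattice point in the cone over the relative interior of that facet on which the gauge is a non-integer $m/n$ (in lowest terms, $n\ge 2$); replacing it by a multiple $kx_0$ with $km\equiv 1\pmod n$ makes the fractional part equal to $1/n\le 1/2$ by homogeneity, and doing this in both factors produces a bad pair, so the formula fails.

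The main obstacle is the facet characterization of integer-valued gauge, and in particular its converse: one must show that a facet with $b_F\ge 2$ forces a lattice point of non-integer gauge. This requires verifying that the primitive facet functional takes a value not divisible by $b_F$ on the lattice points lying in the full-dimensional cone over the relative interior of that facet, so that this facet genuinely attains the maximum defining $\rho_{\Pc}$ there; and it demands extra care because the origin may lie on the boundary of $\Pc$ (as it does for lecture hall polytopes), so that $\cone(\Pc)$ is a proper cone and some gauges are infinite and simply do not contribute to the counts.
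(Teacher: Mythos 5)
The paper does not prove this lemma at all: it is quoted verbatim from Beck--Jayawant--McAllister \cite[Theorem~1.4]{BeckJayawantMcAllister} and used as a black box, so there is no in-paper argument to compare against. Judged on its own, your proposal is essentially a correct reconstruction of the source's proof. The reduction of $\delta(\Pc\oplus\Qc,\lambda)=\delta(\Pc,\lambda)\delta(\Qc,\lambda)$ to $\Ehr_{\Pc\oplus\Qc}(\lambda)=(1-\lambda)\Ehr_{\Pc}(\lambda)\Ehr_{\Qc}(\lambda)$ is right (the dimensions $d+e$ versus $d,e$ account for the extra factor of $1-\lambda$); the gauge reformulation is where the hypothesis ``contains its origin'' is genuinely used, since $0\in\Pc$ is what makes $a\mapsto a\Pc$ nested and hence $x\in a\Pc\iff a\ge\rho_{\Pc}(x)$; and the identity does come down to $\lceil\rho_{\Pc}(x)+\rho_{\Qc}(y)\rceil=\lceil\rho_{\Pc}(x)\rceil+\lceil\rho_{\Qc}(y)\rceil$ for all lattice pairs with finite gauge, with your fractional-part criterion for failure being exactly correct. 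The converse step you flag as the main obstacle does go through as you sketch it: for a facet $F$ whose primitive integral normal $a$ gives $\langle a,\cdot\rangle=b\ge 2$, the cone over $\relint F$ is $d$-dimensional and on it $\rho_{\Pc}$ equals $\langle a,\cdot\rangle/b$ exactly; a sufficiently large ball in that cone together with some $u\in\ZZ^d$ with $\langle a,u\rangle=1$ yields a lattice point of non-integral gauge, and your rescaling by $k$ with $km'\equiv 1\pmod n$ pushes the fractional part to $1/n\le 1/2$ in each factor, producing a bad pair. Two points should be made explicit in a final write-up: first, ``fails the facet condition'' must be read via the primitive integral normal (facet equations are only defined up to scaling, and the polytope being integral guarantees the primitive $b$ is a nonnegative integer); second, the coefficientwise-domination argument needs the remark that for each fixed $t$ only finitely many lattice points are counted, so a single bad pair strictly separates the two counts in degree $\lceil\rho_{\Pc}(x_0)+\rho_{\Qc}(y_0)\rceil$. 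With those sentences added, your argument is a complete and correct proof.
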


We can now give a construction for larger lecture hall polytopes which must be Gorenstein.

\begin{Theorem}
Suppose that $\sb=(s_1,s_2,\ldots,s_d)$ and $\tb=(t_1,t_2,\ldots, t_e)$ are integer sequences such that $\Pb_d^{(\sb)}$ is Gorenstein of index $k$ and $\Pb_e^{(\tb)}$ is Gorenstein of index $\ell$.
Then $\Pb_{d+e+1}^{(\sb,1,\tb)}$ is Gorenstein of index $k+\ell$. 
\end{Theorem}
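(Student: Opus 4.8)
The plan is to compute the $\delta$-polynomial of $\Pb_{d+e+1}^{(\sb,1,\tb)}$, show it is palindromic of the degree forced by index $k+\ell$, and then invoke Stanley's criterion recorded in the introduction: a $D$-dimensional polytope is Gorenstein of index $c$ exactly when its $\delta$-polynomial is symmetric of degree $D-c+1$. The entire purpose of inserting the single $1$ between $\sb$ and $\tb$ is to exhibit the polytope as a free sum whose $\delta$-polynomial \emph{factors}. Concretely, I would first apply Proposition \ref{FreeSumDecomp} with the two blocks $(\sb,1)$ (of length $d+1$) and $\tb$ (of length $e$), yielding the unimodular equivalence $\Pb_{d+e+1}^{(\sb,1,\tb)}\iso \Pb_{d+1}^{(\sb,1)}\oplus \Pb_e^{(\widetilde{\tb})}$, where $\widetilde{\tb}=(t_e,\dots,t_1)$.

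The crux is to verify the hypothesis of Lemma \ref{FreeSumDelta} for the summand $\Pb_{d+1}^{(\sb,1)}$. Reading off its defining inequalities $0\le x_1/s_1\le\cdots\le x_d/s_d\le x_{d+1}/1\le 1$, the facet equations are $-x_1=0$, the equations $s_{i+1}x_i-s_ix_{i+1}=0$ for $1\le i\le d-1$, the equation $x_d-s_dx_{d+1}=0$, and $x_{d+1}=1$; every one of these has right-hand side in $\{0,1\}$. This is precisely where the inserted $1$ does its work: without it the top facet would read $x_d=s_d$ with $s_d$ typically exceeding $1$, and the factorization hypothesis would fail. Thus Lemma \ref{FreeSumDelta} applies and gives $\delta(\Pb_{d+e+1}^{(\sb,1,\tb)},\lambda)=\delta(\Pb_{d+1}^{(\sb,1)},\lambda)\,\delta(\Pb_e^{(\widetilde{\tb})},\lambda)$.

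Next I would rewrite each factor in terms of the original data. Since the final coordinate satisfies $0\le x_d/s_d\le x_{d+1}\le 1$, the polytope $\Pb_{d+1}^{(\sb,1)}$ is the pyramid $\Pyr(\Pb_d^{(\sb)})$: its cross-section at $x_{d+1}=1$ is $\Pb_d^{(\sb)}$ while at $x_{d+1}=0$ the chain forces every coordinate to vanish. Consequently $\delta(\Pb_{d+1}^{(\sb,1)},\lambda)=\delta(\Pb_d^{(\sb)},\lambda)$, as a pyramid multiplies the Ehrhart series by $1/(1-\lambda)$ while raising the dimension by one. By the reversal equivalence $\Pb_e^{(\widetilde{\tb})}\iso\Pb_e^{(\tb)}$ we likewise have $\delta(\Pb_e^{(\widetilde{\tb})},\lambda)=\delta(\Pb_e^{(\tb)},\lambda)$. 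Hence $\delta(\Pb_{d+e+1}^{(\sb,1,\tb)},\lambda)=\delta(\Pb_d^{(\sb)},\lambda)\,\delta(\Pb_e^{(\tb)},\lambda)$.

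Finally I would read off the conclusion. By hypothesis $\delta(\Pb_d^{(\sb)},\lambda)$ is symmetric of degree $d-k+1$ and $\delta(\Pb_e^{(\tb)},\lambda)$ is symmetric of degree $e-\ell+1$; a product of palindromic polynomials is palindromic, with degree the sum $(d+e+2)-(k+\ell)$. As $\Pb_{d+e+1}^{(\sb,1,\tb)}$ has dimension $d+e+1$, Stanley's criterion forces Gorenstein index $c$ with $(d+e+1)-c+1=(d+e+2)-(k+\ell)$, that is $c=k+\ell$. The only genuinely substantive step is the facet computation guaranteeing the factorization of $\delta$; everything else is bookkeeping with the pyramid identity, the reversal symmetry, and a degree count.
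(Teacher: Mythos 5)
Your argument is correct and follows essentially the same route as the paper: decompose via Proposition \ref{FreeSumDecomp} into $\Pb_{d+1}^{(\sb,1)}\oplus\Pb_e^{(\widetilde{\tb})}$, check the facet condition of Lemma \ref{FreeSumDelta} for $\Pb_{d+1}^{(\sb,1)}$, use the pyramid identity and the reversal equivalence to reduce to $\delta(\Pb_d^{(\sb)},\lambda)\,\delta(\Pb_e^{(\tb)},\lambda)$, and conclude by the symmetry-and-degree criterion. Your explicit listing of the facet equations is in fact more detailed than the paper's, which simply asserts the $\Hc$-representation property.
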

\begin{proof}
Note that by Proposition \ref{FreeSumDecomp}, we have that $\Pb_{d+e+1}^{(\sb,1,\tb)}\iso\Pb_{d+1}^{(\sb,1)}\oplus\Pb_e^{(\tilde{\tb})}$. By the $\Hc$-representation, we know that $\Pb_{d+1}^{(\sb,1)}$ satisfies that the equation of each facet is of the form $\sum_{i=1}^{d+1}a_ix_i=b$ where $a_i$ is an integer, $b\in\{0,1\}$. Moreover, from the $\mathcal{V}$-represntation it is clear that $\Pb_{d+1}^{(\sb,1)}\iso\Pyr(\Pb_{d}^{(\sb)})$, so it has the same $\delta$-vector and is thus Gorenstein. By Lemma \ref{FreeSumDelta}, we then know that $\delta(\Pb_{d+e+1}^{(\sb,1,\tb)},\lambda)=\delta(\Pb_d^{(\sb)},\lambda)\delta(\Pb_e^{(\tb)},\lambda)$ because $\Pb_e^{(\tilde{t})}\cong\Pb_e^{({t})}$. Therefore, $\delta(\Pb_{d+e+1}^{(\sb,1,\tb)},\lambda)$ is symmetric polynomial of degree $(d+e+1)-(k+\ell)+1$ and we have the desired. 
\end{proof}

Additionally, necessary and sufficient conditions for the integral closure of a free sum of two polytopes are known. These are given in the following theorem.

\begin{Lemma}[{\cite[Theorem 0.1]{HibiHigashitaniIDP}}]\label{FreeSumIDP}
Let $\Pc\subset \RR^d$ and $\Qc\subset\RR^e$ be integral convex polytopes each containing its respective origin. Suppose that $\Pc$ and $\Qc$ satisfy $\ZZ(\Pc\cap \ZZ^d)=\ZZ^d$, $\ZZ(\Qc\cap \ZZ^e)=\ZZ^e$, and
	\[
	(\Pc\oplus\Qc)\cap\ZZ^{d+e}=\mu(\Pc\cap \ZZ^d)\cup \nu(\Qc\cap \ZZ^e)
	\]
where $\mu$ and $\nu$ are the canonical injections defined $\mu:\RR^d\to \RR^{d+e}$ by $\alpha\mapsto(\alpha,0_e)$ and $\nu:\RR^e\to\RR^{d+e}$ by $\beta\mapsto(0_d,\beta)$. Then the free sum $\Pc\oplus\Qc$ is IDP if and only if the following two conditions hold:
	\begin{itemize}
	\item each of $\Pc$ and $\Qc$ is IDP;
	\item either $\Pc$ or $\Qc$ has the property that the equation of each facet is of the form $\sum_{i=1}^fa_ix_i=b$ where $a_i$ is an integer, $b\in\{0,1\}$, and $f\in\{d,e\}$.
	\end{itemize}	 	
\end{Lemma}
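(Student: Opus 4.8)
The plan is to recast both conditions through the gauge (Minkowski) functional $g_\Pc(\alpha)=\min\{c\ge 0:\alpha\in c\Pc\}$ of each factor. Since $0\in\Pc$, convexity gives $c\Pc\subseteq c'\Pc$ for $c\le c'$, so $\{c:\alpha\in c\Pc\}=[g_\Pc(\alpha),\infty)$ and $g_\Pc(\alpha)=\max\bigl(0,\max_F\langle n_F,\alpha\rangle/b_F\bigr)$, the inner maximum over the facets $\langle n_F,x\rangle\le b_F$ of $\Pc$ with $b_F>0$ and $n_F$ a primitive integer normal. As $k(\Pc\oplus\Qc)=(k\Pc)\oplus(k\Qc)$, whose vertices are those of $k\Pc$ and of $k\Qc$ sitting on the two coordinate subspaces, a direct computation with convex combinations shows that $(\alpha,\beta)\in k(\Pc\oplus\Qc)$ if and only if $g_\Pc(\alpha)+g_\Qc(\beta)\le k$.

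First I would establish a decomposition criterion. By the standing hypothesis $(\Pc\oplus\Qc)\cap\ZZ^{d+e}=\mu(\Pc\cap\ZZ^d)\cup\nu(\Qc\cap\ZZ^e)$, every lattice point of the free sum lies on a coordinate axis, so any expression $(\alpha,\beta)=z_1+\cdots+z_k$ into lattice points of $\Pc\oplus\Qc$ splits into $m$ summands from $\mu(\Pc\cap\ZZ^d)$ and $k-m$ from $\nu(\Qc\cap\ZZ^e)$, forcing $\alpha\in m\Pc\cap\ZZ^d$ and $\beta\in(k-m)\Qc\cap\ZZ^e$; conversely, when $\Pc$ and $\Qc$ are IDP such a split is always realizable. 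Hence, granting both factors IDP, a point $(\alpha,\beta)\in k(\Pc\oplus\Qc)\cap\ZZ^{d+e}$ is decomposable exactly when the interval $[g_\Pc(\alpha),\,k-g_\Qc(\beta)]$ contains an integer. The backward implication is then immediate: if, say, $\Pc$ has the facet property, then every $b_F\in\{0,1\}$, so $g_\Pc(\alpha)\in\ZZ$ for every $\alpha\in\ZZ^d$ in the cone over $\Pc$, and $m=g_\Pc(\alpha)$ is an integer in the (nonempty) interval. I would derive the necessity of "both IDP" directly: given $\alpha\in k\Pc\cap\ZZ^d$, the point $(\alpha,0)$ lies in $k(\Pc\oplus\Qc)\cap\ZZ^{d+e}$ and hence decomposes into on-axis lattice points; reading off the first coordinate writes $\alpha$ as a sum of at most $k$ lattice points of $\Pc$, which I pad with copies of $0\in\Pc\cap\ZZ^d$, and symmetrically for $\Qc$.

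The hard part will be the remaining necessity: if $\Pc\oplus\Qc$ is IDP, then one factor must have the facet property. I would argue the contrapositive. If neither does, then—using that $0\in\Pc$ forces all $b_F\ge 0$—each factor has a facet with $b_F\ge 2$, say a facet $F$ with $b_F=p\ge 2$ and primitive normal $n$ for $\Pc$, and a facet with value $q\ge 2$ for $\Qc$. The key step is to manufacture a lattice point whose gauge has a prescribed small fractional part: I claim there is $\alpha\in\ZZ^d$ with $g_\Pc(\alpha)=M+\tfrac1p$ for some integer $M$. The cone $C$ on which $F$ is the binding constraint is full-dimensional, since $0\notin\aff(F)$ (as $b_F=p>0$) makes the cone over the relative interior of $F$ full-dimensional; so $C$ contains a lattice point $\alpha_0$ in its interior, and after replacing $\alpha_0$ by a large multiple I may assume $\langle n,\alpha_0\rangle$ is a multiple of $p$ and $\alpha_0$ lies deep inside $C$. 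Choosing a lattice vector $u$ with $\langle n,u\rangle=1$ (possible as $n$ is primitive), the point $\alpha=\alpha_0+u$ still lies in $C$, whence $g_\Pc(\alpha)=\langle n,\alpha\rangle/p$ has fractional part exactly $1/p$.

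Building $\beta\in\ZZ^e$ analogously with $g_\Qc(\beta)=M'+\tfrac1q$ and setting $k=M+M'+1$, I would have $g_\Pc(\alpha)+g_\Qc(\beta)=k-(1-\tfrac1p-\tfrac1q)\le k$ because $\tfrac1p+\tfrac1q\le 1$, so $(\alpha,\beta)\in k(\Pc\oplus\Qc)\cap\ZZ^{d+e}$; yet $[g_\Pc(\alpha),\,k-g_\Qc(\beta)]=[M+\tfrac1p,\,M+1-\tfrac1q]$ contains no integer, so by the criterion $(\alpha,\beta)$ is indecomposable and $\Pc\oplus\Qc$ is not IDP. The only genuinely delicate point, and the one I expect to require care, is this fractional-part construction—certifying that the chosen facet really is the binding constraint at a lattice point realizing gauge fractional part $1/p\le\tfrac12$; once that is in hand, the rest is routine bookkeeping with the gauge criterion.
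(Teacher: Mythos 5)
The paper does not prove this statement: it is quoted verbatim from Hibi--Higashitani \cite{HibiHigashitaniIDP}, so there is no in-paper proof to compare against, and your argument has to stand on its own. It does: your proof is correct, and it is in substance the argument of the cited source. The chain of reductions is sound: $(\alpha,\beta)\in k(\Pc\oplus\Qc)$ if and only if $g_\Pc(\alpha)+g_\Qc(\beta)\le k$; the axis hypothesis forces any lattice decomposition to split into $m$ summands from $\mu(\Pc\cap\ZZ^d)$ and $k-m$ from $\nu(\Qc\cap\ZZ^e)$, so that (granting both factors IDP) decomposability is exactly the existence of an integer in $[g_\Pc(\alpha),\,k-g_\Qc(\beta)]$; the facet condition $b\in\{0,1\}$ makes $g_\Pc$ integer-valued on lattice points of the cone over $\Pc$ and supplies that integer; and if both factors have a facet with primitive-normal right-hand side $p,q\ge2$, your construction produces a lattice point with gauge sum $k-(1-\tfrac1p-\tfrac1q)\le k$ whose interval $[M+\tfrac1p,\,M+1-\tfrac1q]$ lies strictly inside $(M,M+1)$ and so misses $\ZZ$. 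The delicate step you flag --- realizing fractional part $\tfrac1p$ at a lattice point where the chosen facet is binding --- is handled correctly by scaling an interior lattice point of $\cone(F)$ by a multiple of $p$ and perturbing by $u$ with $\langle n,u\rangle=1$. Two minor remarks: the formula $g_\Pc(\alpha)=\max\bigl(0,\max_F\langle n_F,\alpha\rangle/b_F\bigr)$ is valid only for $\alpha$ in the cone over $\Pc$ (facets through the origin impose constraints independent of the dilation factor), which is harmless since every point you use lies in $\cone(F)$; and the hypothesis $\ZZ(\Pc\cap\ZZ^d)=\ZZ^d$ is never used in your argument, which does not affect correctness of the implication as stated.
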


We can now give a construction for larger IDP lecture hall polytopes. 

\begin{Theorem}\label{FreeSumIDPLecture}
Suppose that $\sb=(s_1,s_2,\ldots,s_d)$ and $\tb=(t_1,t_2,\ldots, t_e)$ are integer sequences such that $\Pb_d^{(\sb)}$ and $\Pb_e^{(\tb)}$ are IDP.
Then $\Pb_{d+e+1}^{(\sb,1,\tb)}$ is IDP. 
\end{Theorem}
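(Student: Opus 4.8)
The plan is to apply Lemma \ref{FreeSumIDP} (the Hibi--Higashitani criterion) to the free sum decomposition supplied by Proposition \ref{FreeSumDecomp}. First I would rewrite the target via Proposition \ref{FreeSumDecomp}, which gives $\Pb_{d+e+1}^{(\sb,1,\tb)}\iso\Pb_{d+1}^{(\sb,1)}\oplus\Pb_e^{(\tilde{\tb})}$. As in the Gorenstein construction above, the factor $\Pb_{d+1}^{(\sb,1)}$ is unimodularly equivalent to $\Pyr(\Pb_d^{(\sb)})$, so it shares the $\delta$-vector of $\Pb_d^{(\sb)}$ and, more importantly, it is IDP exactly when $\Pb_d^{(\sb)}$ is (a pyramid over an IDP polytope is IDP). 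Likewise $\Pb_e^{(\tilde{\tb})}\iso\Pb_e^{(\tb)}$, so both summands are IDP by hypothesis, securing the first bullet of Lemma \ref{FreeSumIDP}.

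For the second bullet I would exhibit that one summand, namely $\Pb_{d+1}^{(\sb,1)}$, has all facet equations of the required form $\sum a_ix_i=b$ with $a_i\in\ZZ$ and $b\in\{0,1\}$. This is read directly off the $\Hc$-representation of a lecture hall polytope whose last entry is $1$: the defining inequalities are $0\le x_1/s_1$, the chain inequalities $s_{i+1}x_i\le s_ix_{i+1}$ (homogeneous, so $b=0$), and the top inequality $x_{d+1}\le s_{d+1}=1$ (so $b=1$). Hence every facet-defining equation has the prescribed shape, and this is the same observation already used in the Gorenstein theorem.

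The remaining obstacle, and the step that genuinely requires care, is verifying the \emph{hypotheses} of Lemma \ref{FreeSumIDP} rather than its two concluding bullets: one must check the spanning conditions $\ZZ(\Pc\cap\ZZ^{d+1})=\ZZ^{d+1}$ and $\ZZ(\Qc\cap\ZZ^e)=\ZZ^e$, together with the lattice-point decomposition $(\Pc\oplus\Qc)\cap\ZZ^{d+e+1}=\mu(\Pc\cap\ZZ^{d+1})\cup\nu(\Qc\cap\ZZ^e)$. The spanning conditions follow because each lecture hall polytope contains the standard basis-type lattice points coming from its vertex matrix (the vertices include points differing by single coordinate steps), so the lattice points already generate the full integer lattice. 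The union condition is where the vertex $1$ in the middle slot does the work: since that coordinate forces $0\le x/1\le\frac{x'}{s}$ to pinch the gluing coordinate, any lattice point of the free sum lies in one of the two coordinate subspaces, exactly as in the pyramid picture. I would verify this union condition explicitly from the $\Hc$-description, as it is the one place the argument could fail if the gluing entry were not $1$.

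\begin{proof}
By Proposition \ref{FreeSumDecomp} we have $\Pb_{d+e+1}^{(\sb,1,\tb)}\iso\Pb_{d+1}^{(\sb,1)}\oplus\Pb_e^{(\tilde{\tb})}$. Set $\Pc=\Pb_{d+1}^{(\sb,1)}$ and $\Qc=\Pb_e^{(\tilde{\tb})}$. From the $\mathcal V$-representation, $\Pc\iso\Pyr(\Pb_d^{(\sb)})$, so $\Pc$ is IDP because it is a pyramid over the IDP polytope $\Pb_d^{(\sb)}$; similarly $\Qc\iso\Pb_e^{(\tb)}$ is IDP by hypothesis. This gives the first condition of Lemma \ref{FreeSumIDP}. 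For the second, the $\Hc$-representation of $\Pc=\Pb_{d+1}^{(\sb,1)}$ consists of $-x_1\leq 0$, the homogeneous chain inequalities $s_{i+1}x_i-s_ix_{i+1}\leq 0$, and $x_{d+1}\leq 1$; each facet equation therefore has the form $\sum a_ix_i=b$ with $a_i\in\ZZ$ and $b\in\{0,1\}$. The spanning conditions $\ZZ(\Pc\cap\ZZ^{d+1})=\ZZ^{d+1}$ and $\ZZ(\Qc\cap\ZZ^e)=\ZZ^e$ hold since the vertices of each lecture hall polytope generate the ambient integer lattice, and the decomposition $(\Pc\oplus\Qc)\cap\ZZ^{d+e+1}=\mu(\Pc\cap\ZZ^{d+1})\cup\nu(\Qc\cap\ZZ^e)$ follows from the pyramid structure of $\Pc$. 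Thus Lemma \ref{FreeSumIDP} applies and $\Pb_{d+e+1}^{(\sb,1,\tb)}$ is IDP.
\end{proof}
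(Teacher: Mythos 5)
Your proof follows the same route as the paper: decompose via Proposition \ref{FreeSumDecomp}, check the two bullets of Lemma \ref{FreeSumIDP} using the $\Hc$-representation of $\Pb_{d+1}^{(\sb,1)}$, and verify the lemma's hypotheses. You are in fact more explicit than the paper on one point the paper glosses over, namely that $\Pb_{d+1}^{(\sb,1)}\iso\Pyr(\Pb_d^{(\sb)})$ is itself IDP (needed for the first bullet). However, two of your justifications are off. First, the vertices of a lecture hall polytope do \emph{not} generate the ambient lattice: they are $\mathbf{0},(s_d,0,\ldots,0),(s_d,s_{d-1},0,\ldots,0),\ldots$, whose differences span only $s_d\ZZ\oplus\cdots\oplus s_1\ZZ$ (already for $\sb=(2)$ the vertices $0,2$ generate $2\ZZ$). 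The spanning condition $\ZZ(\Pc\cap\ZZ^d)=\ZZ^d$ is true, but you must use non-vertex lattice points, e.g.\ $(s_d,\ldots,s_{j+1},1,0,\ldots,0)$, whose successive differences give the standard basis vectors.

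Second, in the proof proper you reduce the decomposition $(\Pc\oplus\Qc)\cap\ZZ^{d+1+e}=\mu(\Pc\cap\ZZ^{d+1})\cup\nu(\Qc\cap\ZZ^e)$ to ``the pyramid structure of $\Pc$,'' which is an assertion, not an argument; your own plan correctly identifies this as the step requiring explicit verification, but the verification never appears. The paper's argument is the one you should supply: write a lattice point $\xb$ of the free sum as a convex combination $\sum c_iv_i$ of the vertices; since every nonzero vertex of $\Pb_{d+1}^{(\sb,1)}$ has $(d+1)$-st coordinate equal to $1$ and the inequalities force $x_{d+1}>0$ whenever the $\Pc$-component of $\xb$ is nonzero, integrality pinches $x_{d+1}=1$, hence $\sum_{i=1}^{d+1}c_i=1$ and all $\Qc$-coefficients vanish, so $\xb\in\mu(\Pc\cap\ZZ^{d+1})$. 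With these two repairs your proof matches the paper's.
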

\begin{proof}
Note that for any 2 lecture hall polytopes  $\Pb_d^{(\sb)}$ and $\Pb_e^{(\tb)}$, we have $\ZZ(\Pb_d^{(\sb)}\cap \ZZ^d)=\ZZ^d$ and  $\ZZ(\Pb_e^{(\tb)}\cap \ZZ^e)=\ZZ^e$ follow immediately.
	
Now, by Proposition \ref{FreeSumDecomp}, we have that $\Pb_{d+e+1}^{(\sb,1,\tb)}\iso\Pb_{d+1}^{(\sb,1)}\oplus\Pb_e^{(\tilde{\tb})}$. By the $\Hc$-representation, we know that $\Pb_{d+1}^{(\sb,1)}$ satisfies that the equation of each facet is of the form $\sum_{i=1}^{d+1}a_ix_i=b$ where $a_i$ is an integer, $b\in\{0,1\}$. To see that
	\[ 
	(\Pb_{d+1}^{(\sb,1)}\oplus\Pb_e^{(\tilde{\tb})})\cap\ZZ^{d+1+e}=\mu(\Pb_{d+1}^{(\sb,1)}\cap \ZZ^{d+1})\cup \nu(\Pb_e^{(\tilde{\tb})}\cap \ZZ^e)
	\]	
holds, note that the right side is clearly contained in the left side. If we consider an element $\xb$ such that 
	\[
	\xb\in(\Pb_{d+1}^{(\sb,1)}\oplus\Pb_e^{(\tilde{\tb})})\cap\ZZ^{d+1+e}\setminus\left(\mu(\Pb_{d+1}^{(\sb,1)}\cap \ZZ^{d+1})\cup \nu(\Pb_e^{(\tilde{\tb})}\cap \ZZ^e)\right),
	\]
we have that $\sum_{i=1}^{d+e+1}c_i v_i=1$ where $c_i$ is constant and $v_i$ is the $i$th vertex. However, we also must have that $x_{d+1}=1$, which implies that $\sum_{i=1}^{d+1}c_i=1$ from the definition of the free sum. So this implies that $\xb\in \mu (\Pb_{d+1}^{(\sb,1)}\cap \ZZ^{d+1})$ which is a contradiction. The result now follows from Lemma \ref{FreeSumIDP}.
\end{proof}

\section{Concluding Remarks}
While we have been able to ascertain many previously unknown properties of lecture hall polytopes, full characterizations of all of these properties remain elusive. We conclude with two conjectures.

\begin{Conjecture}\label{IDPconj}
For any $\sb=(s_1,\cdots,s_d)$, $\Pb_d^{(\sb)}$ is IDP.
\end{Conjecture}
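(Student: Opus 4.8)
The plan is to show that the descent step underlying Theorem \ref{normality} does not in fact require $\sb$ to be monotone, so that the very same induction delivers IDP for an arbitrary positive sequence. Recall that the proof of Theorem \ref{normality} establishes, for each $k\ge 2$ and each $\xb\in k\Pb_d^{(\sb)}\cap\ZZ^d$, a lattice point $\yb\in\Pb_d^{(\sb)}\cap\ZZ^d$ with $\xb-\yb\in(k-1)\Pb_d^{(\sb)}\cap\ZZ^d$; an induction on $k$ then produces the decomposition demanded by IDP. The only place monotonicity of $\sb$ is invoked there is to deduce, for $C\in\ZZ_{>0}$, the two implications $x_i\le Cs_i\Rightarrow x_{i-1}\le Cs_{i-1}$ and $x_i>Cs_i\Rightarrow x_{i+1}>Cs_{i+1}$. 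First I would record that these are in truth consequences of the defining chain $\frac{x_{i-1}}{s_{i-1}}\le\frac{x_i}{s_i}\le\frac{x_{i+1}}{s_{i+1}}$ of the dilate $k\Pb_d^{(\sb)}=\Pb_d^{(ks_1,\dots,ks_d)}$, which holds for any positive sequence: from $\frac{x_{i-1}}{s_{i-1}}\le\frac{x_i}{s_i}\le C$ one reads off $x_{i-1}\le Cs_{i-1}$, and dually for the strict inequality. No hypothesis on the relative sizes of the $s_i$ enters.

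With these implications available for general $\sb$, the remainder transfers verbatim. The forward implication (with $C=k-1$) shows that the set $\{\,i:x_i>(k-1)s_i\,\}$ is an up-set, hence a suffix $\{j,\dots,d\}$; this is exactly what makes the threshold vector $\yb=(\,x_d-(k-1)s_d,\dots,x_j-(k-1)s_j,0,\dots,0\,)^T$ carry its support at the top and its zeros at the bottom. On the suffix each ratio $\frac{y_i}{s_i}=\frac{x_i}{s_i}-(k-1)$ lies in $(0,1]$ and inherits the non-decreasing order from $\frac{x_i}{s_i}$, so $\yb\in\Pb_d^{(\sb)}$; complementarily $\xb-\yb$ has ratio $1$ on the suffix and ratio $\frac{x_i}{(k-1)s_i}\le 1$ on the prefix, the latter bound coming from minimality of $j$, so $\xb-\yb\in(k-1)\Pb_d^{(\sb)}$. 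Both are lattice points since the $s_i$ and $x_i$ are integers, and induction on $k$ finishes the proof for every $\sb$.

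The step warranting the most care --- and, presumably, the reason the statement is only recorded as a conjecture --- is verifying that nothing in the threshold construction degenerates once monotonicity is dropped. Concretely I would check that the large set is genuinely contiguous, that each peeled coordinate satisfies $0<x_i-(k-1)s_i\le s_i$ (the upper bound following from $\frac{x_i}{ks_i}\le\frac{x_d}{ks_d}\le 1$, i.e. $x_i\le ks_i$), and that the junction between the zeroed prefix and the reduced suffix respects the non-decreasing-ratio condition. I would first stress-test these on small non-monotone sequences, for instance $\sb=(1,3,2)$, where $(1,6,4)\in 2\Pb_3^{(\sb)}$ indeed peels as $(0,3,2)+(1,3,2)$, to guard against a hidden integrality obstruction in the dilation $k\Pb_d^{(\sb)}=\Pb_d^{(ks_1,\dots,ks_d)}$. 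Should some non-monotone family unexpectedly defeat the descent, the fallback is to pursue the stronger conclusion of a unimodular triangulation as in Theorem \ref{triangulation}, extending the chimney construction of Lemma \ref{chimneytriangulation} beyond the divisibility hypothesis $s_{i+1}=k_is_i$; a unimodular triangulation would imply IDP, albeit by a substantially more intricate argument.
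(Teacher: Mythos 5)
The statement you were asked to prove is not proved in the paper at all: it is recorded as Conjecture 5.1, and the authors themselves remark immediately afterwards that the identity $c\Pb_d^{(\sb)}=\Pb_d^{(cs_1,\dots,cs_d)}$ ``suggests that one may be able to generalize our arguments for monotone sequences to arbitrary $\sb$.'' Your proposal is precisely that generalization, so there is no proof in the paper to compare it against; what matters is whether your argument actually closes the gap.

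Having checked it, I cannot find a flaw, and your diagnosis of where monotonicity enters Theorem \ref{normality} looks right. The implication $x_i\le Cs_i\Rightarrow x_{i-1}\le Cs_{i-1}$ needs only $\frac{x_{i-1}}{s_{i-1}}\le\frac{x_i}{s_i}\le C$ and $s_{i-1}>0$, so the set $\{i: x_i>(k-1)s_i\}$ is a suffix for every positive $\sb$. I would streamline your write-up by setting $y_i=\max\bigl(0,\,x_i-(k-1)s_i\bigr)$ and $z_i=\min\bigl(x_i,\,(k-1)s_i\bigr)$ for all $i$ at once: then $y_i+z_i=x_i$, both are integers, and since $\frac{y_i}{s_i}=\max\bigl(0,\tfrac{x_i}{s_i}-(k-1)\bigr)$ and $\frac{z_i}{s_i}=\min\bigl(\tfrac{x_i}{s_i},k-1\bigr)$ are images of the nondecreasing chain $\frac{x_i}{s_i}\in[0,k]$ under nondecreasing maps into $[0,1]$ and $[0,k-1]$ respectively, one gets $\yb\in\Pb_d^{(\sb)}\cap\ZZ^d$ and $\zb\in(k-1)\Pb_d^{(\sb)}\cap\ZZ^d$ with no case analysis and no mention of $j$; induction on $k$ finishes. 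Two cautions. First, be aware that you are claiming to resolve a problem the authors explicitly considered and left open after pointing at exactly this strategy, so you should present every step of the suffix/junction verification in full and test the construction computationally on a range of non-monotone $\sb$ (including sequences with large drops, e.g.\ $\sb=(1,N,1)$) before asserting the conjecture is settled. Second, your fallback of extending the unimodular triangulation machinery of Theorem \ref{triangulation} is a much harder and independent problem (it is the paper's second conjecture); do not let the IDP claim rest on it.
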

For many randomly generated $\sb$, we have found $\Pb_d^{(\sb)}$ to be IDP and we have been unable to find an example of a non IDP lecture hall polytope. Additionally, the convenient description of dilates of lecture hall polytopes, namely $c\Pb_d^{(\sb)}=\Pb_d^{(cs_1,cs_2,\cdots,cs_d)}$, suggests that one may be able to generalize our arguments for monotone sequences to arbitrary $\sb$.

\begin{Conjecture}
For any $\sb=(s_1,\cdots,s_d)$, $\Pb_d^{(\sb)}$ admits a unimodular triangulation.
\end{Conjecture}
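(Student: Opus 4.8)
The plan is to prove this stronger statement by constructing an \emph{explicit} unimodular triangulation whose maximal cells are indexed by the inversion sequences $\Ib_d^{(\sb)}$. The starting observation is a volume count: the normalized volume of $\Pb_d^{(\sb)}$ equals $\delta\left(\Pb_d^{(\sb)},1\right)=\sum_{\eb\in\Ib_d^{(\sb)}}1=|\Ib_d^{(\sb)}|=\prod_{i=1}^d s_i$ by Lemma \ref{deltapoly}, so any unimodular triangulation must have exactly $\prod_{i=1}^d s_i$ maximal simplices, one per element of $\Ib_d^{(\sb)}$. This strongly suggests that the right triangulation is one in which each inversion sequence $\eb$ labels a single unimodular simplex $\Delta_{\eb}$, with the statistic $\asc\,\eb$ recording the number of upper bounding facets of $\Delta_{\eb}$ (equivalently, its position in a shelling), so that the $h$-vector of the triangulation reproduces $\delta\left(\Pb_d^{(\sb)},\lambda\right)$.

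First I would build the cells $\Delta_{\eb}$ by induction on $d$ using the fibration $\Pb_d^{(\sb)}\to\Pb_{d-1}^{(s_1,\dots,s_{d-1})}$ that forgets the last coordinate; its fiber over $\xb$ is the interval $\tfrac{s_d}{s_{d-1}}x_{d-1}\le x_d\le s_d$. In the base case $d=1$ the polytope $\Pb_1^{(s_1)}=[0,s_1]$ is triangulated by the unit segments $[e_1,e_1+1]$, recovering the labelling by $e_1\in\{0,\dots,s_1-1\}$. Assuming a cell $\Delta_{(e_1,\dots,e_{d-1})}$ of the base, I would lift it over the chimney interval by a staircase construction, interleaving the lattice heights $x_d$ between $\lceil\tfrac{s_d}{s_{d-1}}x_{d-1}\rceil$ and $s_d$ and connecting consecutive lattice layers, with the labelling chosen so that the new ascent in position $d$ is present exactly when the top facet $x_d=s_d$ is retained. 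The unimodularity of each $\Delta_{\eb}$ (the remaining point being to check that its edge vectors form a $\ZZ$-basis) should then follow from a telescoping determinant along the staircase, the key being that each lifting step adds a single lattice height and so multiplies the relevant minor by $1$.

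The main obstacle is verifying that these cells actually assemble into a genuine triangulation for \emph{arbitrary} $\sb$: that their interiors are pairwise disjoint, that pairwise intersections are common faces, and that they cover $\Pb_d^{(\sb)}$. Once unimodularity and the count $|\Ib_d^{(\sb)}|=\prod s_i=\Vol\left(\Pb_d^{(\sb)}\right)$ are in hand, disjointness of interiors already forces their union to have full volume, hence to be all of $\Pb_d^{(\sb)}$, so the crux is disjointness together with face-matching. This is exactly where the divisibility hypothesis of Theorem \ref{triangulation} was used: when $s_{d-1}\mid s_d$ the lower chimney functional $\tfrac{s_d}{s_{d-1}}x_{d-1}$ is integral and Lemma \ref{chimneytriangulation} organizes the gluing automatically. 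For general $\sb$ this functional is only rational, the number of lattice layers varies across a base cell, and cells coming from neighbouring base-simplices interlock in a delicate way controlled by $\gcd(s_{d-1},s_d)$; ensuring that these interlocking layers meet in faces rather than overlapping in lower-dimensional lattice pieces is the heart of the difficulty and the reason the conjecture remains open.

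As an alternative route around the gluing problem, I would instead try a \emph{regular} triangulation obtained by pulling the lattice points of $\Pb_d^{(\sb)}$ in a carefully chosen order (for instance reverse-lexicographic in the $x_i$) and attempt to show directly that every pulled simplex is compressed, hence unimodular; the $s$-Eulerian combinatorics underlying Lemma \ref{deltapoly} would be the natural bookkeeping device for matching the resulting $h$-vector with $\delta\left(\Pb_d^{(\sb)},\lambda\right)$. Either way, a successful argument would also settle Conjecture \ref{IDPconj}, since a polytope admitting a unimodular triangulation is IDP.
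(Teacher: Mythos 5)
This statement is one of the paper's concluding \emph{conjectures}: the paper offers no proof of it, and your proposal does not supply one either --- by your own admission, the step where the staircase cells over neighbouring base simplices must meet face-to-face for arbitrary $\sb$ is left open, and that step is not a technicality but the entire content of the problem. Everything before it is consistent bookkeeping rather than proof: the volume count $\delta\bigl(\Pb_d^{(\sb)},1\bigr)=|\Ib_d^{(\sb)}|=\prod_{i=1}^d s_i$ is correct and does suggest a triangulation with one unimodular simplex per inversion sequence, and your chimney-lifting construction genuinely works when $s_{d-1}\mid s_d$ --- but that is exactly the paper's Theorem \ref{triangulation} (via Lemma \ref{chimneytriangulation}), where the lower functional $\tfrac{s_d}{s_{d-1}}x_{d-1}$ is integral. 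Once that divisibility fails, the lower bounding functional is not an integral linear functional, Lemma \ref{chimneytriangulation} does not apply, the number of lattice layers over a base cell is non-constant, and nothing in your sketch controls how the resulting partial layers from adjacent base cells intersect. Note also that the volume argument you invoke (disjoint interiors plus full volume forces a triangulation) only yields a cover by simplices with disjoint interiors; face-to-face matching, which is what ``triangulation'' requires here, must still be argued separately.

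Your fallback route has a concrete obstruction as well. A simplex is compressed if and only if it is unimodular, so ``show every pulled simplex is compressed'' is a restatement of the goal, not a method; and pulling triangulations of a lattice polytope are guaranteed unimodular only when the polytope itself is compressed, i.e., has lattice width one with respect to every facet. Lecture hall polytopes generically fail this: already the facet directions $s_{i+1}x_i - s_i x_{i+1}\le 0$ and $x_d \le s_d$ give widths greater than one for most $\sb$, so a pulling triangulation in a fixed order can produce non-unimodular simplices. This matches the paper's own concluding remark: orderings yielding a squarefree quadratic Gr\"obner basis (equivalently, a regular unimodular triangulation) \emph{seem} to exist for each $\sb$, but no single uniform ordering works, which is precisely why the conjecture is stated as open. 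Your proposal is a reasonable research program and correctly locates the difficulty, but it should not be presented as a proof, and the final sentence's implication to Conjecture \ref{IDPconj} is only conditional on the unproved gluing step.
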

We have come across no examples of lecture hall polytopes which do not admit a unimodular triangulation. However, using Gr\"obner bases has not proved fruitful given that though a variable ordering and monomial ordering which yield a quadratic squarefree Gr\"obner basis seem to always exist, it is not always the same ordering. A positive answer to this conjecture would resolve Conjecture \ref{IDPconj} as well. Moreover, a counterexample, or a positive partial result such as the monotone case would be of great interest.   


\end{document}